\tikzset{filled/.style={minimum width=5pt,inner sep=0pt,circle,fill=black}}
\newtheorem{theorem}{Theorem}[section]
\newtheorem{lemma}[theorem]{Lemma}
\newtheorem{corollary}[theorem]{Corollary}
\newtheorem{conjecture}[theorem]{Conjecture}
\theoremstyle{definition}
\newtheorem{definition}[theorem]{Definition}
\theoremstyle{remark}
\newtheorem{remark}[theorem]{Remark}
\numberwithin{equation}{section}
\numberwithin{figure}{section}
\renewcommand{\mod}{\operatorname{mod}}
\newcommand{\YT}{\operatorname{YT}}
\newcommand{\M}{\operatorname{M}}
\newcommand{\R}{\operatorname{R}}
\newcommand{\mM}{\operatorname{mM}}
\newcommand{\Out}{\operatorname{Out}}
\newcommand{\In}{\operatorname{In}}
\newcommand{\N}{\mathbb{N}}
\title[Young Tableaux Via Minors]{Young Tableau Reconstruction Via Minors}
\author[Erickson, Herden, Meddaugh, Sepanski, $\ldots$]{William Q. Erickson, Daniel Herden, Jonathan Meddaugh, Mark R. Sepanski, Cordell Hammon, Jasmin Mohn, Indalecio Ruiz-Bolanos}
\address{
All authors:
Department of Mathematics,
Baylor University,
Sid Richardson Building,
1410 S.~4th Street,
Waco, TX 76706, USA}
\email{will\_erickson@baylor.edu, daniel\_herden@baylor.edu,  jonathan\_meddaugh@baylor.edu, mark\_sepanski@baylor.edu}
\date{\today}
\begin{document}
\keywords{Young tableau, reconstruction, jeu de taquin, minor}
\subjclass[2020]{Primary: 05E10; Secondary: }

\begin{abstract}
    The tableau reconstruction problem, posed by Monks (2009), asks the following.  Starting with a standard Young tableau $T$, a 1-minor of $T$ is a tableau obtained by first deleting any cell of $T$, and then performing jeu de taquin slides to fill the resulting gap. This can be iterated to arrive at the set of $k$-minors of $T$.  The problem is this: given $k$, what are the values of $n$ such that every tableau of size $n$ can be reconstructed from its set of $k$-minors?  For $k=1$, the problem was recently solved by Cain and Lehtonen.  In this paper, we solve the problem for $k=2$, proving the sharp lower bound $n \geq 8$.  In the case of multisets of $k$-minors, we also give a lower bound for arbitrary $k$, as a first step toward a sharp bound in the general multiset case.  \end{abstract}

\maketitle
\tableofcontents
\ytableausetup{centertableaux}

\section{Introduction}

In \cite{Monks2009}, Monks solved the \emph{partition reconstruction problem} posed by Pretzel and Siemons in \cite{Pretzel}: given a positive integer $k$, determine the values of $n$ such that each partition of $n$ can be reconstructed from its set of $k$-minors (i.e., the partitions obtained by deleting $k$ cells from the corresponding Young diagram).  This problem was originally motivated by the representation theory of the symmetric group $S_n$: Monks's solution thus determines when the character of an $S_n$-module can be recovered from its restriction to stabilizer subgroups of $S_n$.

In turn, Monks posed an analogue of the problem by asking the same question for standard Young tableaux \cite[Section 4.3]{Monks2009}.  These are Young diagrams whose cells are given a total ordering such that the cells increase along rows and columns.  With this added structure, Monks defined a natural analogue for $k$-minors in terms of Sch\"utzenberger's \emph{jeu de taquin} procedure.

Specifically, the \emph{tableau reconstruction problem} is the following.  Starting with a tableau $T$, we choose any cell to delete, and then perform the necessary sequence of jeu de taquin ``slides'' to fill the resulting gap until we obtain a new Young tableau, one cell smaller than the original; see Section \ref{sec:prelim} for details. We call this new tableau a \emph{$1$-minor} of $T$.  The \emph{set of $1$-minors} of $T$ collects the resulting $1$-minors for all possible choices of initial cells to delete.  The \emph{set of $k$-minors} of $T$ is then defined recursively to be the union of the sets of $1$-minors of the $(k-1)$-minors of $T$.  The problem is this: given $k$, what are the values of $n$ such that every tableau of size $n$ can be reconstructed from its set of $k$-minors?  For $k=1$, the problem was recently solved by Cain and Lehtonen \cite{CainLehtonen2022}, who proved reconstructibility for the sharp bound $n \geq 5$.  As the authors of the paper observe, their methods are quite specific to the $k=1$ case.

It is worth observing that for $k=1$, the process proposed by Monks for deleting cells can be interpreted as a generalization of Schüt\-zen\-ber\-ger's \emph{dual promotion operator} on tableaux \cite{Schutzenberger1972}.  In particular, the dual promotion  operator yields the $1$-minor obtained by deleting the upper-left cell in the tableau (assuming we ignore the largest cell in the result; see Remark~\ref{rem:promotion}).  Incidentally, by invoking the inverse of the dual promotion operator, we are able to give a highly efficient algorithm to reconstruct $T$ directly from a certain distinguished $1$-minor, as opposed to the inductive procedure in~\cite{CainLehtonen2022}; see Remark~\ref{rem:alternate proof M1}.

Our main result in this paper is a solution to the tableau reconstruction problem for $k = 2$.  In this case, we prove that $n \geq 8$ is the sharp lower bound for reconstructibility from $2$-minors, see Theorem~\ref{thm: k = 2 sharp}. For general $k$, we have found some evidence to suggest that the lower bound is $n \geq k^2 + 2k$, which arises as the size of a $(k+1) \times (k+1)$ tableau with one cell removed (see Figure~\ref{figure: k+1 square minus lower right corner} and Conjecture~\ref{conj:k^2 + 2k}).  We also consider the problem in terms of \emph{multisets} of $k$-minors; in this case, we are able to formulate a lower cubic bound for all $k$ (see Theorem~\ref{thm: exp lower bound}).  The software calculations we performed to verify our results have also led to a conjecture for multisets of minors; see Conjecture~\ref{conj:k+4}, where the bound on $n$ appears to be linear in $k$.

\section*{Acknowledgement}
The authors thank Daniel Bossaller for helpful suggestions on this project.

\section{Preliminaries}
\label{sec:prelim}

In this paper, we write $\N$ for $\mathbb{Z}_{\geq 1}$ and will use interval notation restricted to $\N$. For example, we will write $[1, n]$ for $\{1, 2, \ldots, n\}$.

For $n \in \N$, a \emph{partition} of $n$ is a weakly decreasing finite sequence $\lambda = (\lambda_1, \ldots, \lambda_m)$ of positive integers such that $\sum_i \lambda_i = n$.
 We call $n$ the \emph{size} of $\lambda$.
  The \emph{Young diagram} of shape $\lambda$ is a left-aligned array of cells with $\lambda_h$ boxes in the $h$th row, counting from the top. As an example, the Young diagram of shape $(5, 5, 4, 2, 1, 1)$ is given in Figure~\ref{figure: yd 554211}.

\begin{figure}[H]
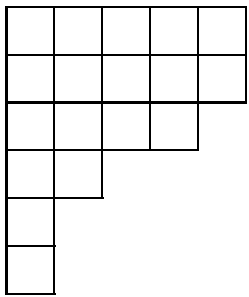

    \centering
    \ydiagram{5,5,4,2,1,1}
    \caption{Young diagram of shape $(5, 5, 4, 2, 1, 1)$}
    \label{figure: yd 554211}
\end{figure}

Let $\lambda$ be a partition of $n$, and suppose $N\geq n$. A \emph{standard Young tableau} of shape $\lambda$, over the alphabet $[1, N]$, is obtained from the Young diagram of $\lambda$ by filling the cells with distinct elements of $[1, N]$ so that entries in each row and column are strictly increasing. In this paper, we will simply use the word \emph{tableau} to mean a standard Young tableau. We write $\YT(n,N)$ for the set of tableaux of size $n$ over $[1,N]$.  In this paper, usually $N = n$; in this case, we simply write $\YT(n)$
for the set of tableaux of size $n$ over $[1,n]$.
For example, Figure \ref{figure: yt 554211} gives an example of a tableau of shape $(5, 5, 4, 2, 1, 1)$ over $[1,18]$. We often identify a cell with its entry; we also say that a cell is \emph{labeled} by its entry.

\begin{figure}[H]
    \centering
    \begin{ytableau}
        1 & 2 & 3 & 4 & 5 \\
        6 & 7 & 8 & 9 & 18 \\
        10 & 11 & 12 & 17 \\
        13 & 16 \\
        14 \\
        15
    \end{ytableau}
    \caption{A standard Young tableau}
    \label{figure: yt 554211}
\end{figure}

Given a tableau $T\in\YT(n)$, an \emph{outer corner} (\emph{OC} for short) is a cell of $T$ which is both the right end of a row and the bottom end of a column of $T$. For example, in Figure \ref{figure: yt 554211}, the OCs are $15$, $16$, $17$, and~$18$.

If $R$ is a collection of $k$ cells of $T$ we write $|R| = k$. Thus $|T| = n$.

If $m$ is an entry of a cell $c$ of $T$, we define its
\emph{outer area}, denoted by either $\Out(m)$ or $\Out(c)$, to be the collection of all cells of $T$ that are to the right of $m$ or below $m$. Figure \ref{figure: out example} gives an example of $\Out(m)$.

\begin{figure}[H]
    \centering
    \ytableaushort
        {\none, \none, \none, \none, \none \none m, \none, \none, \none}
        * {5,5,4,3,3,2,1,1}
        * [*(lightgray)]{3+2,3+2,3+1,3+0,3+0,0+2,0+1,0+1}
    \caption{$\Out(m)$}
    \label{figure: out example}
\end{figure}

We also define the \emph{inner area} of $m$, written as either $\In(m)$ or $\In(c)$, to be the set of cells that are both weakly left of $m$ and weakly above~$m$.
 Figure \ref{figure: in example} gives an example of $\In(m)$.  Note that all cells in $\In(m)$ have entries that are less than or equal to $m$.

\begin{figure}[H]
    \centering
    \ytableaushort
        {\none, \none, \none, \none, \none \none m, \none, \none, \none}
        * {5,5,4,3,3,2,1,1}
        * [*(lightgray)]{3,3,3,3,3,0,0,0}
    \caption{$\In(m)$}
    \label{figure: in example}
\end{figure}

There is a natural way of deleting cells from a tableau using the process known as \emph{jeu de taquin} (introduced in  \cite{Schützenberger1977} in the context of rectifying semistandard skew tableaux). To describe the process, begin with $T \in \YT(n)$ and a cell filled with the entry $m$. Begin by deleting this cell, leaving an empty space. Next, iterate the following procedure until it terminates: if there exists a cell either directly to the right or directly below the empty space, slide the cell with the smaller entry into the position of the empty space. This terminates when there are no cells directly to the right or below the current empty space; in other words, the process necessarily terminates at an OC. Finally, subtract $1$ from each entry larger than $m$. The resulting tableau is an element of $\YT(n-1)$ and is denoted by $T - m$. The jeu de taquin works the same way for $T \in \YT(n,N)$ and produces an element of $\YT(n-1,N-1)$.
An example is given in Figure \ref{figure: cell deletion example}.

\begin{figure}[H]
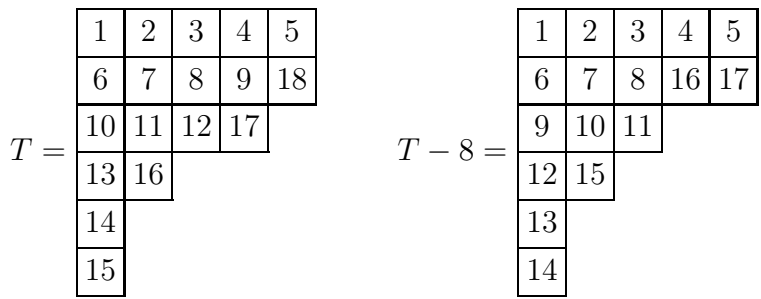

    \centering
    $T =$
    \begin{ytableau}
        1 & 2 & 3 & 4 & 5 \\
        6 & 7 & 8 & 9 & 18 \\
        10 & 11 & 12 & 17 \\
        13 & 16 \\
        14 \\
        15
    \end{ytableau}
    \;\;\;\;\;\;\;
    $T - 8 =$
    \begin{ytableau}
        1 & 2 & 3 & 4 & 5 \\
        6 & 7 & 8 & 16 & 17 \\
        9 & 10 & 11  \\
        12 & 15 \\
        13 \\
        14
    \end{ytableau}
    \caption{Cell deletion via jeu de taquin}
    \label{figure: cell deletion example}
\end{figure}

For $T \in \YT(n)$ and $k \in \N$, a $k$\emph{-minor} of $T$ is a tableau formed by iteratively deleting $k$ cells from $T$ via jeu de taquin. We write
$\M_k(T)$ for the \emph{set of all $k$-minors} of $T$ and $\mM_k(T)$ for the \emph{multiset of all $k$-minors} of $T$.  Clearly if $k \geq n$, then $\M_k(T) = \varnothing$, so we assume $k < n$.
If $T'\in \M_k(T)$ contains an OC which is also an OC of $T$ (possibly with a different entry), then we call the cell a \emph{surviving outer corner} (\emph{SOC} for short).

It is known
\cite[Thm.~2.5]{Monks2009} that the shape of $T\in\YT(n)$ can be recovered from the shapes of the tableaux in $\M_k(T)$ when
\begin{equation} \label{equation: Monks shape determination bound}
    n \geq k^2 + 2k.
\end{equation}
It is known that this bound is sharp, although in general, for a given $k$, there exist some sporadic tableaux $T$ with $n<k^2 + 2k$ for which the shape of $T$ can still be recovered from $\M_k(T)$.

\begin{remark}
\label{rem:promotion}
As mentioned in the introduction, when the initial deleted entry $m$ is $1$, the process described above is essentially the dual promotion operator $\partial^*$ introduced in~\cite{Schutzenberger1972}.
(Elsewhere in the literature, confusingly, this operator is also called the promotion operator, or even the demotion operator.)
To be precise, $T-1 = \partial^*(T) - n$ for all $T \in \YT(n)$. There is also an inverse, denoted by $\partial$.
The operators $\partial$ and $\partial^*$ have been the subject of more recent research in \cite{Shimozono, Stanley, Rhoades, Ahlbach}, among others.
\end{remark}

\begin{remark}
\label{rem:alternate proof M1}
The invertibility of $\partial^*$ allows for a highly efficient algorithm to reconstruct $T \in \YT(n)$ from $\M_1(T)$ for all $n \geq 5$.
(This differs from the inductive method described in ~\cite{CainLehtonen2022}.)
Since $\M_1(T)$ determines the shape of $T$ for $n \geq 5$, one can determine the non-surviving OC in any minor; then since $\partial^*$ is invertible, in order to recover $T$, it suffices to identify $T-1 \in \M_1(T)$.
Define the \emph{initial string} of a tableau to be the maximal set of cells labeled $1, \ldots, \ell$ which form an unbroken row or column. We have $\ell \ge 2$. One easily checks the following:
\begin{itemize}
\item If $\M_1(T)$ contains a minor whose initial string is properly contained in the initial strings of all other minors, then this minor is $T-1 =T-2=\ldots =T-\ell$, where $\ell$ denotes the length of the initial string of $T$, and we are done. This case occurs whenever $\ell \geq 3$.

\item If there is no minimal initial string, then $\ell = 2$ and so the location of the entry $2$ in $T-1$ $( = T-2)$ differs from all other minors.
Hence as long as $|\M_1(T)\,| \geq 3$, we can immediately identify $T-1$ and we are done.
If $|\M_1(T)\,| = 2$, then $T$ must have the shape $(n-1,1)$ or its transpose, with $T-1$ ${( = T-2)}$ being the unique minor consisting of one single row or one single column, respectively.
\end{itemize}
\end{remark}

\section{General Identification Lemmas}

In this section we give lemmas that can help reconstruct parts of tableaux from their minors.

\begin{lemma} \label{lemmma: construct In(m) when m in same place}
    Let $n,k\in\N$ and $T\in\YT(n)$. Suppose $m$ is a known entry of a cell $c$ in $T$ that is an OC. If there exists $T'\in\M_k(T)$ in which the cell $c$ exists and is labeled by $m$, then all cells of $T$ with entries less than $m$ are reconstructible from $\M_k(T)$. In particular, $\In(m)$ is reconstructible from $\M_k(T)$.
\end{lemma}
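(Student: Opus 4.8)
The plan is to realize $T'$ through an explicit chain of single jeu de taquin deletions and to track the sub-tableau consisting of all cells whose label is at most $m$, showing that this sub-tableau is never disturbed. The workhorse is the following elementary invariance property of a single deletion, which I would record first: when an entry $d$ is deleted from a tableau, every cell that is moved by the slide lies in the outer area of the deleted cell and hence has label $> d$, while the subsequent global decrement lowers only entries $> d$; consequently every cell with label $< d$ is left completely unchanged, in both position and label. I would also use that the shapes occurring in a deletion chain are nested and decrease by exactly one outer corner at a time.

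First I would fix a chain $T = T_0 \to T_1 \to \cdots \to T_k = T'$ in which each $T_i$ is obtained from $T_{i-1}$ by deleting one cell, whose label in $T_{i-1}$ I call $d_i$. Since the shape of each $T_i$ is contained in that of $T_{i-1}$, and $c \in T' = T_k \subseteq \cdots \subseteq T_0$, the cell $c$ is present in every $T_i$. Because $c$ is an outer corner of $T$ and every minor shape is contained in the shape of $T$, the cell $c$ is an outer corner of each $T_i$ as well, so nothing lies directly to its right or directly below it at any stage.

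Next I would argue that position $c$ is never moved by a slide. A slide path travels only rightward and downward, so if such a path ever reached the outer corner $c$ it would have to terminate there, meaning $c$ would be deleted — contradicting that $c$ survives in $T'$. Hence the label at $c$ can change only through the global decrement step, so the values $v_i$ at position $c$ satisfy $v_0 \ge v_1 \ge \cdots \ge v_k$. Since $v_0 = v_k = m$, we conclude $v_i = m$ for all $i$, and therefore each deleted entry satisfies $d_i > m$ (the case $d_i = m$ would delete $c$, and $d_i < m$ would force a decrement lowering the value at $c$ below $m$). Now the invariance property applies at each step with $d = d_i > m$: deleting $d_i$ leaves every cell with label $\le m$ unchanged in position and label. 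By induction along the chain, the cells of $T$ with labels $\le m$ coincide exactly, as positions and labels, with the cells of $T'$ with labels $\le m$. Since $m$ and the outer corner $c$ are known, I can locate within $\M_k(T)$ a minor in which position $c$ carries the label $m$ (one exists by hypothesis) and simply read off its cells with labels $< m$; these are precisely the cells of $T$ with labels $< m$. As $\In(m)$ consists of $c$ together with the cells of label $< m$ lying weakly northwest of $c$, it is reconstructed as well.

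The main obstacle is the middle step: establishing that a surviving outer corner can never be touched by a slide, and hence that its label is monotone nonincreasing along the chain. Once that monotonicity is secured, the equality $v_0 = v_k = m$ forces every deletion to take place strictly above $m$, after which the conclusion follows from the single-slide invariance property stated at the outset.
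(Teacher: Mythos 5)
Your proof is correct and takes essentially the same approach as the paper's: both arguments rest on the observations that nothing can ever slide into the outer corner $c$, that the survival of the label $m$ at $c$ forces every deleted entry to exceed $m$, and that deleting entries larger than $m$ leaves all cells with labels less than $m$ unchanged in position and value. Your write-up merely makes explicit (via the deletion chain and the monotone sequence of values at $c$) what the paper's shorter proof leaves implicit.
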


 \begin{proof}
    First note that, since $c$ is an OC, no cell may slide into its position during cell deletion.
    Now, in the construction of $T'$, if any cell had been deleted that was less than $m$, then the value of $m$ would have been reduced in $T'$. Since this did not happen, only cells larger than $m$ were deleted.
    Since every entry of $\In(m)$ in $T$ is less than $m$, only cells in $\Out(m)$ that were larger than $m$ were deleted. Any change resulting from deleting such cells is confined to cells in $\Out(m)$ that are greater than $m$. In particular, this means that the entries of $T$ that started out less than $m$ are all unchanged in $T'$ and therefore recoverable.
 \end{proof}

Slightly more generally, the same proof gives the following result.

\begin{lemma} \label{lemmma: general construct In(m) when m in same place and nothing could slide}
    Let $n,k\in\N$ and $T\in\YT(n)$. Suppose $m$ is a known entry of a cell $c$ in $T$ and it is known that no other cell can slide to $c$ and retain the same entry $m$ after $k$ deletions. If there exists $T'\in\M_k(T)$ in which the cell $c$ exists and is labeled by $m$, then all cells of $T$ with entries less than $m$ are reconstructible from $\M_k(T)$. In particular, $\In(m)$ is reconstructible from $M_k(T)$.
\end{lemma}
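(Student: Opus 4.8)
The plan is to mimic the proof of Lemma~\ref{lemmma: construct In(m) when m in same place} almost verbatim, with the single hypothesis ``$c$ is an OC'' replaced by the weaker hypothesis ``no other cell can slide to $c$ and retain the entry $m$ after $k$ deletions.'' The key observation in the original proof is used in two essentially independent places, and I would isolate these. First, the fact that $c$ is an OC was used to guarantee that the cell $c$ itself still carries the label $m$ only if the value was not shifted; but more fundamentally, the OC hypothesis was invoked to ensure that no cell slides \emph{into} position $c$ during the deletions, so that the presence of $m$ at $c$ in $T'$ can only mean that $m$ was never displaced. The generalized hypothesis is tailored precisely to preserve this conclusion: if we know a priori that no other cell can slide to $c$ and still show the entry $m$ after $k$ deletions, then the existence of $T' \in \M_k(T)$ with $m$ at $c$ forces the conclusion that the cell originally labeled $m$ was itself never disturbed.

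First I would argue, exactly as before, that since $m$ still labels $c$ in $T'$, no entry smaller than $m$ was deleted in producing $T'$; otherwise every entry above $m$ (including $m$ itself) would have been decremented, so the label at $c$ would be strictly less than $m$. This step uses only the decrement rule of jeu de taquin and carries over unchanged. Next, I would use the new hypothesis to conclude that the cell bearing $m$ in $T'$ is genuinely the image of the original cell $c$ and not some other cell that slid in: since by assumption no other cell can occupy $c$ with entry $m$ after $k$ deletions, the entry $m$ sitting at $c$ in $T'$ must be the undisturbed original. This is the one place where the argument genuinely differs from the OC case, and it is here that I would be most careful.

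Having established that only cells with entries larger than $m$ were deleted and that $c$ was never displaced, the remainder is identical to the original proof: every cell of $\In(m)$ has entry at most $m$, so none of these cells was deleted, and any reshuffling caused by deleting cells larger than $m$ is confined to $\Out(m)$ among entries exceeding $m$. Hence all entries of $T$ that started below $m$ are unchanged in $T'$ and therefore directly readable off $T'$, which proves reconstructibility of all such cells and in particular of $\In(m)$. I expect the main (though modest) obstacle to be stating the generalized hypothesis cleanly enough that the ``no other cell slides into $c$ with label $m$'' conclusion follows immediately; once that is phrased correctly, the proof is a routine transcription of the previous one, which is presumably why the authors simply assert that ``the same proof'' applies.
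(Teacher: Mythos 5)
Your proposal is correct and matches the paper's approach exactly: the paper's entire proof of this lemma is the remark that ``the same proof'' as Lemma~\ref{lemmma: construct In(m) when m in same place} applies, with the new hypothesis playing precisely the role that the OC condition played there, namely ruling out the possibility that some other cell slides into $c$ and shows the label $m$. One small ordering point: your deduction that ``no entry smaller than $m$ was deleted'' should come \emph{after}, not before, the identification of the $m$ at $c$ as the undisturbed original cell (otherwise a larger entry that slid into $c$ could in principle have been decremented down to $m$, invalidating that inference); since your second step supplies exactly this identification and does not depend on the first, swapping the two steps makes the write-up airtight.
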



\begin{definition}
    Let $n \in \N$ and $T \in \YT(n)$. Write $$\R_n T$$ for the tableau in $\YT(n-1)$ obtained by removing the cell with label $n$ from $T$. (Throughout the paper, ``remove'' carries this obvious sense, while ``delete'' refers to the jeu de taquin process. Note, however, $R_n T = T - n$.)
 More generally for $d \in [1, n]$, write $\R_{[d, n]} T$ for the tableau $\R_{d}(\R_{d + 1}(\ldots (\R_n T)\ldots))$. We also allow $\R_{[d, n]}$ to be applied to a set of tableaux by applying $\R_{[d, n]}$ to each tableau in the set.
\end{definition}

In \cite[Lemma 3.3]{CainLehtonen2022}, it is shown that
\begin{equation} \label{equation: removal of n and M1}
    \M_1(\R_n T) = \R_{n-1} \M_1(T)
\end{equation}
for $T \in \YT(n)$. This result generalizes.

\begin{lemma} \label{lemma: removal of top elements and Mk}
    Let $n,k \in \N$, let $d \in [k+1,n]$, and let $T \in \YT(n)$. Then
        \[ \M_k (\R_{[d, n]} T) = \R_{[d-k, n-k]} \M_k (T). \]
\end{lemma}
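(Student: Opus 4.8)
The plan is to reduce the statement to the known single-removal identity~\eqref{equation: removal of n and M1} by means of two nested inductions, one on $k$ and one on the number of removed top cells. Two elementary facts will be used repeatedly: first, the recursive definition $\M_k(T)=\bigcup_{S\in\M_{k-1}(T)}\M_1(S)$; and second, that as an operation on \emph{sets} of tableaux, each $\R_m$ commutes with unions, i.e.\ $\R_m\bigcup_i A_i=\bigcup_i \R_m A_i$.

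\textbf{Step 1 (single top removal, arbitrary $k$).} First I would prove the special case $d=n$, namely
\[ \M_k(\R_n T)=\R_{n-k}\M_k(T), \qquad T\in\YT(n), \]
by induction on $k$. The base case $k=1$ is exactly~\eqref{equation: removal of n and M1}. For the step, I expand $\M_k(\R_n T)=\bigcup_{S\in\M_{k-1}(\R_n T)}\M_1(S)$, use the induction hypothesis $\M_{k-1}(\R_n T)=\R_{n-k+1}\M_{k-1}(T)$ to rewrite the index set, and reindex the union over $U\in\M_{k-1}(T)$. Each such $U$ has size $n-k+1$, hence largest entry $n-k+1$, so~\eqref{equation: removal of n and M1} applies to $U$ and gives $\M_1(\R_{n-k+1}U)=\R_{n-k}\M_1(U)$. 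Pulling $\R_{n-k}$ outside the union then yields $\R_{n-k}\bigcup_{U}\M_1(U)=\R_{n-k}\M_k(T)$, completing Step 1.

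\textbf{Step 2 (iterating to arbitrary $d$).} With Step 1 available, I would finish by downward induction on $d$, the base case $d=n$ being Step 1. For the inductive step, write $\R_{[d,n]}T=\R_d(\R_{[d+1,n]}T)$ and put $S=\R_{[d+1,n]}T\in\YT(d)$. Step 1 applied to $S$ (with $d$ playing the role of $n$) gives $\M_k(\R_d S)=\R_{d-k}\M_k(S)$, while the induction hypothesis gives $\M_k(S)=\R_{[d-k+1,\,n-k]}\M_k(T)$. Composing the two and using $\R_{d-k}\R_{[d-k+1,\,n-k]}=\R_{[d-k,\,n-k]}$ gives the claim.

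The inductions themselves are routine; the real work, and the only place the argument is delicate, is the index bookkeeping in Step 1. One must verify that every $(k-1)$-minor $U$ of $T$ has largest entry exactly $n-k+1$, so that~\eqref{equation: removal of n and M1} applies verbatim, and that the two relabelings---removing the top cell before versus after forming a $1$-minor---produce matching alphabets. I would also treat the extreme value $d=k+1$ separately: there $\R_{[d,n]}T$ has size $k$ and $\R_{[d-k,n-k]}$ strips away all remaining cells, so both sides degenerate and must be checked directly against the convention that $\M_k(V)=\varnothing$ when $k\ge|V|$. Finally, the restriction to \emph{sets} (rather than multisets $\mM_k$) is essential precisely here: the building block~\eqref{equation: removal of n and M1} fails as a multiset identity, since $\R_n T=T-n$ is itself a $1$-minor of $T$, making the two sides have different cardinalities when multiplicities are tracked.
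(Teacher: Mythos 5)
Your proof is correct and takes essentially the same route as the paper's: the paper likewise reduces to the single-removal identity $\M_k(\R_n T)=\R_{n-k}\M_k(T)$ (your Step 2 is just an explicit downward-induction version of its ``by iterating'') and then proves that identity by exactly your Step 1 induction on $k$, rewriting $\M_k(\R_n T)$ as a union over $\M_{k-1}(\R_n T)$, reindexing via the inductive hypothesis, applying \eqref{equation: removal of n and M1} to each $(k-1)$-minor of size $n-k+1$, and pulling $\R_{n-k}$ out of the union. Your closing remarks on the degenerate case $d=k+1$ and on the failure of the multiset analogue are extra care the paper does not spell out, but they do not change the method.
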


\begin{proof}
    By iterating, it suffices to show that
    \[ \M_k (\R_{n} T) = \R_{n-k} \M_k (T). \]
    We show this equality via induction on $k$.
    The base case of $k = 1$ is already done in \eqref{equation: removal of n and M1}. Assuming the result is known up to $k - 1$, we simply calculate that
    \begin{align*}
        \M_k(\R_n T) &= \bigcup_{T' \in \M_{k-1}(\R_n T)} \M_1( T' ) \\
                    &= \bigcup_{T' \in \R_{n-k+1}\M_{k-1}(T)} \M_1( T' ) \\
                    &= \bigcup_{T'' \in \M_{k-1}(T)} \M_1( \R_{n-k+1} T'' ) \\
                    &= \bigcup_{T'' \in \M_{k-1}(T)} \R_{n-k} \M_1( T'' ) \\
                    &= \R_{n-k} \left(\bigcup_{T'' \in \M_{k-1}(T)} \M_1( T'' )\right)
                    = \R_{n-k} \M_k(T).\qedhere
    \end{align*}
\end{proof}

\begin{lemma}
\label{lemma:location of k2 + 2k through n}
    Let $n,k \in \N$ and $T \in \YT(n)$. If $n \geq k^2 + 2k + 1$, then the location in $T$ of each entry in $[(k+1)^2, n]$ is determined by $\M_k(T)$.
\end{lemma}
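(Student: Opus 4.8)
The plan is to locate each large entry $m$ by realizing the cell that contains it as the difference of two successive Young diagrams, namely the shapes of $\R_{[m+1,n]}T$ and $\R_{[m,n]}T$. Concretely, $\R_{[m,n]}T$ is the tableau of size $m-1$ obtained by removing the entries $m,m+1,\ldots,n$, while $\R_{[m+1,n]}T$ has size $m$ and removes only $m+1,\ldots,n$; hence the unique cell in the shape of $\R_{[m+1,n]}T$ that is absent from the shape of $\R_{[m,n]}T$ is precisely the cell of $T$ labeled $m$. So it suffices to show that, for every $m\in[(k+1)^2,n]$, both of these shapes are determined by $\M_k(T)$. I note at the outset that the shape of $T$ itself is determined by $\M_k(T)$, since $n\geq k^2+2k+1>k^2+2k$ and we may invoke \eqref{equation: Monks shape determination bound}.

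First I would show that $\M_k(\R_{[d,n]}T)$ is determined by $\M_k(T)$ for each $d$ in range. By Lemma~\ref{lemma: removal of top elements and Mk}, for $d\in[k+1,n]$ we have
\[
\M_k(\R_{[d,n]}T)=\R_{[d-k,\,n-k]}\,\M_k(T),
\]
and the right-hand side is computable directly from $\M_k(T)$: each minor has size $n-k$, and applying the deterministic removal operator $\R_{[d-k,n-k]}$ (which merely strips off the top entries) to every tableau in the set yields $\M_k(\R_{[d,n]}T)$. The index conditions are satisfied because $d-k\geq 1$ and $d\leq n$ throughout the relevant range. Next, $\R_{[d,n]}T$ is a tableau of size $d-1$, and the set of shapes appearing in $\M_k(\R_{[d,n]}T)$ is exactly the set of partition $k$-minors of its shape; thus by \eqref{equation: Monks shape determination bound} the shape of $\R_{[d,n]}T$ is recovered provided $d-1\geq k^2+2k$, that is, $d\geq (k+1)^2$.

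Finally I would assemble the successive differences. For $m\in[(k+1)^2,n]$, applying the previous step with $d=m$ gives the shape of $\R_{[m,n]}T$, since $m\geq(k+1)^2$ meets the size requirement $m-1\geq k^2+2k$. For the shape of $\R_{[m+1,n]}T$, when $m<n$ I apply the step with $d=m+1\in[k+1,n]$; here the size requirement is $m\geq k^2+2k=(k+1)^2-1$, which holds since $m\geq(k+1)^2$. When $m=n$ the tableau $\R_{[n+1,n]}T$ is $T$ itself, whose shape is already determined. Taking the set difference of these two determined shapes pins down the cell of $m$, completing the argument. The main point requiring care is the bookkeeping on sizes and index ranges so that both the removal lemma (hypothesis $d\in[k+1,n]$) and the Monks shape bound (hypothesis size $\geq k^2+2k$) apply simultaneously at $d=m$ and $d=m+1$; the edge case $m=n$ must be handled separately via the empty-removal convention $\R_{[n+1,n]}T=T$. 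Everything else reduces to the established shape-determination result and the computability of the removal operator.
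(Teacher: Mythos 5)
Your proposal is correct and takes essentially the same approach as the paper's proof: both determine the shape of each truncation $\R_{[d,n]}T$ from $\M_k(T)$ by combining Lemma~\ref{lemma: removal of top elements and Mk} with the Monks shape bound \eqref{equation: Monks shape determination bound}, and then locate each entry $m$ as the single-cell difference between the shapes of $\R_{[m+1,n]}T$ and $\R_{[m,n]}T$. The paper phrases this as a downward induction peeling off $n, n-1, \ldots$, whereas you state the shape comparison directly for each $m$ with the same size and index bookkeeping, so the content is identical.
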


\begin{proof}
    Recall from \eqref{equation: Monks shape determination bound} that the shape of $T$ is determined by (the shapes in) $\M_k(T)$ when $n \geq k^2 + 2k$.
    When $n - 1 \geq k^2 + 2k$, the shape of $\R_n T$ is determined by $\M_k (\R_n T)$ which, in turn, is the same as $\R_{n-k} \M_k(T)$,
    by Lemma \ref{lemma: removal of top elements and Mk}. Since taking the complement of the shape of $\R_n T$ in the shape of $T$ gives us the location of $n$ in $T$, it follows that $\M_k(T)$ determines the location of $n$ when $n \geq k^2 + 2k + 1$.

    Next, when $n - 2 \geq k^2 + 2k$, the shape of $\R_{[n-1, \: n]} T$ is determined by $\M_k (\R_{[n-1, n]} T)$ which, in turn, is the same as $\R_{[n-k-1, \: n-k]} \M_k(T)$. Again by comparing the shapes of $\R_n T$ and $\R_{[n-1,\: n]} T$, it follows that $\M_k(T)$ determines the location of $n-1$ when $n \geq k^2 + 2k + 2$.

    Straightforward induction demonstrates that the location of $[m, n]$ is determined by $\M_k(T)$ when $n \geq k^2 + 2k + 1 + n - m.$ In particular, this is minimally satisfied by $m = k^2 + 2k + 1$ when $n \geq k^2 + 2k + 1.$
\end{proof}

\section{General Cubic Lower Bound}

\begin{theorem} \label{thm: exp lower bound}
    Let $n,k \in \N$ and $T \in \YT(n)$. The multiset $\mM_k(T)$ determines $T$ when
    \begin{equation} \label{eq:4.1}
    2 \binom{n - k^2 -2k}{k} > \binom{n}{k}.
    \end{equation}
    In particular, this is satisfied when
    \[ n > k^2 + 3k - 1+\frac{k^2 + 2k}{2^{\frac{1}{k}} - 1}, \]
    which gives
    \[ n > \frac{k^3+2k^2}{\ln 2}+\frac {k^2}2 +2k-1 + \frac{\ln 2}{12} (k+2) \]
    as a cubic lower bound.
\end{theorem}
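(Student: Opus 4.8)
The plan is to reconstruct $T$ from $\mM_k(T)$ by recovering separately its shape, the positions of the \emph{large} entries $[k^2+2k+1,n]$, and the positions of the \emph{small} entries $[1,k^2+2k]$; I will call the sub-tableau formed by the latter the \emph{core} of $T$. Note first that the hypothesis \eqref{eq:4.1} forces $\binom{n-k^2-2k}{k}>0$, hence $n\geq k^2+3k$, so in particular $n\geq k^2+2k+1$ and $n-k\geq k^2+2k$. Thus the shape of $T$ is already determined by \eqref{equation: Monks shape determination bound}, and the locations of all entries in $[(k+1)^2,n]=[k^2+2k+1,n]$ are determined by Lemma~\ref{lemma:location of k2 + 2k through n}. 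Since these two ranges partition $[1,n]$, once the core is known it can be overlaid with the already-known large-entry positions to recover $T$ in full. The entire problem therefore reduces to extracting the core from the multiset.

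\textbf{The key structural fact.} Next I would show that deleting any entry $m\geq k^2+2k+1$ leaves the core fixed, both in position and in label. Indeed, the jeu de taquin gap starts at $m$ and at each step absorbs the smaller of its right and lower neighbors; in a standard tableau every such cell exceeds the current gap entry, so every displaced cell has entry $>m\geq k^2+2k+1$, hence lies outside the core. The final decrement by $1$ likewise affects only entries $>m$. Iterating a length-$k$ deletion that removes only (images of) the original large cells, the relabelings keep each surviving large cell's entry above the threshold $k^2+2k$, so the core is untouched throughout; the resulting minor, restricted to its entries in $[1,k^2+2k]$ (equivalently, after removing every entry exceeding $k^2+2k$ from it), is exactly the core of $T$. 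I would say a minor \emph{displays} a core $C$ if this restriction equals $C$; this is well-defined for every minor in $\mM_k(T)$, whether or not it arose from a large-only deletion.

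\textbf{The majority count.} The heart of the argument is then a counting of how many minors display the true core. The number of length-$k$ deletion sequences removing only large entries is $k!\binom{n-k^2-2k}{k}$, since there are $n-k^2-2k$ large cells and the count drops by one at each of the $k$ steps; by the structural fact, each such sequence yields a minor displaying the true core. As $\mM_k(T)$ has exactly $k!\binom{n}{k}$ elements, the number of minors displaying \emph{any other} core is at most $k!\bigl(\binom{n}{k}-\binom{n-k^2-2k}{k}\bigr)$, and \eqref{eq:4.1} says precisely that $\binom{n-k^2-2k}{k}$ strictly exceeds this complement. Hence strictly more than half of $\mM_k(T)$ displays the true core, which is therefore the unique core displayed by a strict majority of the multiset and so is recovered from $\mM_k(T)$ alone. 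The step I expect to require the most care is the structural claim, together with the bookkeeping that the successive relabelings never promote a core entry past the threshold; by contrast the factor $k!$ is a common factor cancelling from both sides of \eqref{eq:4.1}, so the ordered-versus-unordered counting convention is not a genuine difficulty.

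\textbf{The explicit bounds.} Finally, to obtain the stated bounds I would estimate the ratio $\binom{n-k^2-2k}{k}/\binom{n}{k}=\prod_{i=0}^{k-1}\frac{n-k^2-2k-i}{n-i}$ from below by $\bigl(1-\tfrac{k^2+2k}{n-k+1}\bigr)^{k}$, using $n-i\geq n-k+1$ in each factor. Requiring this to exceed $\tfrac12$ is equivalent to $1-\tfrac{k^2+2k}{n-k+1}>2^{-1/k}$, which rearranges, via $1-2^{-1/k}=\tfrac{2^{1/k}-1}{2^{1/k}}$, to the sharp-looking bound $n>k^2+3k-1+\tfrac{k^2+2k}{2^{1/k}-1}$. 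The cubic bound is then pure estimation: substituting $x=\tfrac{\ln 2}{k}$ and controlling $\tfrac{1}{2^{1/k}-1}$ through the expansion $\tfrac{1}{e^{x}-1}=\tfrac1x-\tfrac12+\tfrac{x}{12}-\cdots$ produces $n>\tfrac{k^3+2k^2}{\ln 2}+\tfrac{k^2}{2}+2k-1+\tfrac{\ln 2}{12}(k+2)$, the only point of care being that the truncation of this expansion be taken on the upper-bounding side.
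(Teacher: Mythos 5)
Your proposal is correct and follows essentially the same route as the paper: both invoke Lemma~\ref{lemma:location of k2 + 2k through n} for the entries above $k^2+2k$, then count the $k!\binom{n-k^2-2k}{k}$ deletion sequences that touch only large cells (which leave the small entries fixed) to run a strict-majority argument over the multiset of size $k!\binom{n}{k}$, and obtain the explicit bounds via the same ratio estimate $\bigl(1-\tfrac{k^2+2k}{n-k+1}\bigr)^k > \tfrac12$ and the same Taylor-type bound on $x/(2^x-1)$. The only cosmetic difference is that you recover the entire sub-tableau of entries in $[1,k^2+2k]$ with a single majority vote, whereas the paper votes on the location of each such entry $m$ separately; the governing inequality is identical in both versions.
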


\begin{proof}
    From Theorem \ref{lemma:location of k2 + 2k through n}, we know that the location in $T$ of each entry in $[(k+1)^2, n]$ is determined by $\M_k(T)$ when $n \geq k^2 + 2k + 1$. It remains to find the location of $1, 2, \ldots, k^2 + 2k$.

    For $m \in [1, k^2 + 2k]$, the location of $m$ remains fixed in any $k$-minor of $T$ when the deleted cells are chosen from $[m + 1, n]$. Thus there are at least $(n - m) (n - m - 1) \ldots (n - m - k + 1)$
    minors in $\mM_k(T)$ with $m$ fixed. As there are $n (n-1) \ldots (n-k+1)$ minors in total, the location of $m$ may be identified as the most frequent location of $m$ in the multiset of $k$-minors when
    \[ \binom{n - m}{k} > \frac{1}{2} \binom{n}{k}. \]
    In particular, if the above inequality is satisfied for $m = k^2 + 2k$, it is satisfied for all $m\le k^2 + 2k$.

    To give an explicit, non-sharp lower bound, observe that
    \[\frac{n-k^2-2k-j}{n-j} = 1-\frac{k^2+2k}{n-j}\geq  1-\frac{k^2+2k}{n-k+1}\]
    for $0\leq j \leq k-1$. So, to find the location of $1, 2, \ldots, k^2 + 2k$, it suffices to have $( 1-\frac{k^2+2k}{n-k+1})^k > \frac{1}{2}$. It is straightforward to rewrite this as
    \begin{align*}
    n & > k-1+\frac{k^2+2k}{1 - 2^{-\frac{1}{k}}}\\ & = k-1 + \frac{2^{\frac{1}{k}}(k^2 + 2k)}{2^{\frac{1}{k}} - 1}= k^2 + 3k - 1+\frac{k^2 + 2k}{2^{\frac{1}{k}} - 1}.
    \end{align*}
    From the Taylor series of the function $f(x)=2^x$ one easily checks
    \[\frac{x}{2^x-1}\le \frac1{\ln 2}-\frac x2 + \frac{\ln 2}{12} x^2\]
    for $x\ge 0$, which gives
    \[k^2 + 3k - 1+\frac{k^2 + 2k}{2^{\frac{1}{k}} - 1}\le \frac{k^3+2k^2}{\ln 2}+\frac {k^2}2 +2k-1 + \frac{\ln 2}{12} (k+2).\qedhere\]
\end{proof}

It is easy to check with Equation \eqref{eq:4.1} that the multiset $\mM_2(T)$ determines $T$ when $n \geq 28$. In fact, we will see below in Theorem \ref{thm: k = 2 sharp} that $n \geq 8$ is the sharp lower bound for determining $T$ by $\M_2(T)$. As a result, the above cubic bound is far from sharp.

\section{Improved Location of $n$}

For $T \in \YT(n)$, Lemma \ref{lemma:location of k2 + 2k through n} shows that $\M_k(T)$ determines the location of $n$ when $n \geq k^2 + 2k + 1$. This section shows the same is true when $n = k^2 + 2k$.

\begin{lemma} \label{lemma: outer corner counting}
    Let $n,k\in\N$ and $T\in\YT(n)$ with OCs $c_1, \ldots, c_\ell$.
    Suppose $|\Out(c_j)| \leq k $ for at most one OC. Then the location of $n$ is recoverable from $\M_k(T)$.
\end{lemma}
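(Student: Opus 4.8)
The plan is to reduce the problem of locating $n$ to the problem of identifying which outer corner of $T$ is removed when we pass to $\R_n T$. Write $\lambda$ for the shape of $T$; since $\M_k(T)$ determines $\lambda$ whenever $n\ge k^2+2k$ by \eqref{equation: Monks shape determination bound} (the regime in which this lemma will be applied), we may treat $\lambda$, and hence the positions of $c_1,\dots,c_\ell$, as known. The entry $n$ sits at some outer corner $c_{j^*}$, and removing it yields $\R_n T$ of shape $\lambda^{(j^*)} := \lambda \setminus \{c_{j^*}\}$; thus locating $n$ is the same as recovering the shape of $\R_n T$. By Lemma~\ref{lemma: removal of top elements and Mk} (with $d = n$) we have $\M_k(\R_n T) = \R_{n-k}\,\M_k(T)$, so the entire set of $k$-minors of $\R_n T$, and in particular the set of their shapes, is computable from $\M_k(T)$.

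First I would record a shape-level sub-lemma: for any tableau $S$ of shape $\nu$, the union (as sets of cells) of the shapes occurring in $\M_k(S)$ equals $U(\nu) := \{x \in \nu : |\Out(x)| \ge k\}$, where $\Out$ is computed in $\nu$. Indeed a cell $x$ survives in some $k$-minor exactly when $k$ cells can be deleted from $\Out(x)$ while retaining all of $\In(x)\ni x$, which is possible iff $|\Out(x)| \ge k$; and every $\mu \subseteq \nu$ with $|\nu/\mu| = k$ is realized, since any skew shape can be dismantled by successively deleting corners. Applying this to $S = \R_n T$ recovers $U(\lambda^{(j^*)})$ from $\M_k(T)$.

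The key structural observation is that removing an outer corner decreases every other cell's outer area by exactly one: since the rows of $\lambda$ are weakly decreasing, no cell lies strictly below and strictly to the right of an outer corner, so for every $x \ne c_j$ we have $c_j \in \Out(x)$ and hence $|\Out_{\lambda^{(j)}}(x)| = |\Out(x)| - 1$. Consequently, writing $V := \{x \in \lambda : |\Out(x)| \ge k+1\}$ (a set determined by $\lambda$ alone),
\[ U(\lambda^{(j)}) = \{x \ne c_j : |\Out(x)| \ge k+1\} = V \setminus \{c_j\}. \]
Thus from the recovered set $U(\lambda^{(j^*)})$ I would form $V \setminus U(\lambda^{(j^*)})$. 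If this is a single cell, it must be $c_{j^*}$ and we are done. The only obstruction is the case $c_{j^*} \notin V$, i.e.\ $|\Out(c_{j^*})| \le k$, in which case $U(\lambda^{(j^*)}) = V$ carries no positional information. This is precisely where the hypothesis enters: $c_{j^*}$ is then an outer corner with $|\Out(c_{j^*})| \le k$, and by assumption there is at most one such outer corner, so $c_{j^*}$ is the unique outer corner of $\lambda$ with $|\Out| \le k$ and is identifiable from $\lambda$ alone. I expect the crux to be this final disambiguation, together with the clean reduction $U(\lambda^{(j)}) = V \setminus \{c_j\}$ that isolates the removed corner; the hypothesis is exactly the condition guaranteeing that the otherwise-ambiguous case is resolvable.
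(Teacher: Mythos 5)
Your proof is correct, but it takes a genuinely different route from the paper's. The paper argues at the level of \emph{entries}: for each OC $c_j$ with $|\Out(c_j)| \geq k+1$, it considers the minimum entry $m_j$ appearing in that cell over all minors in which the cell survives, and shows that $m_j = n-k$ if and only if $c_j$ holds $n$; the at-most-one exceptional corner is then handled by elimination, exactly as in your final step. Your argument is instead purely \emph{shape}-theoretic: you encode the location of $n$ into shapes via Lemma~\ref{lemma: removal of top elements and Mk} with $d=n$ (so that $\M_k(\R_n T) = \R_{n-k}\M_k(T)$ is computable from $\M_k(T)$ without knowing where $n$ is), and then recover the shape of $\R_n T$ from the union of the shapes of its $k$-minors, using two clean combinatorial facts: (i) for any tableau of shape $\nu$, the union of its $k$-minor shapes is $\{x \in \nu : |\Out_\nu(x)| \geq k\}$, which you justify correctly in both directions (containment because any partition $\mu$ containing $x$ contains $\In(x)$, realizability via saturated chains in Young's lattice together with direct deletion of outer-corner cells); and (ii) every OC lies in $\Out(x)$ for every other cell $x$, so deleting the corner $c_j$ turns this union into $V \setminus \{c_j\}$. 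This mirrors the proof of Lemma~\ref{lemma:location of k2 + 2k through n}, but replaces Monks's shape-determination bound with the OC hypothesis, which is exactly what lets the argument work at $n = k^2+2k$ rather than $k^2+2k+1$. Note that both proofs presuppose that the shape of $T$ (hence the positions of the $c_j$) is available --- the paper implicitly (it must locate each $c_j$ and test survival), you explicitly --- and this matches how the lemma is applied in the paper (always with the shape known). Your approach buys conceptual clarity: there is no entry bookkeeping, and the hypothesis enters at precisely the point where the missing corner leaves no trace in the union of shapes; the paper's approach is more self-contained, using no removal lemma and working locally at each corner.
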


\begin{proof}
    Let $x_1, \ldots, x_\ell$ denote the entries of the cells $c_1, \ldots, c_\ell$, respectively.

    Suppose first that $|\Out(c_j)| \geq k + 1$ for all $j$, $1\leq j \leq \ell$. Then by deleting $k$ cells in $\Out(c_j)$, there exists $T'\in \M_k(T)$ in which the cell $c_j$ is a SOC. Find the minimum entry $m_j$ in $c_j$ among all $T'\in\M_k(T)$ in which $c_j$ survives. Since nothing can slide into the cell $c_j$, the entry there in $T'$ equals
    $x_j$ minus the number of deleted cells which were less than $x_j$.

    If $x_j = n$, then $m_j = n - k$ as every other cell is less than $n$.
    If $x_j < n - k$, then certainly $m_j < n - k$.

    If $n - k \leq x_j < n$, only the entries $x_j + 1, x_j + 2, \ldots, n$ are larger than $x_j$ in $\Out(c_j)$. Thus there are at least
    $d_j = k + 1 - (n - x_j)$ entries in $\Out(c_j)$ that are less than $x_j$. Choose any $k$ entries of $\Out(c_j)$ that include at least $d_j$ entries that are less than $x_j$. This results in a $T'$ for which $c_j$ survives and has an entry of at most
    $x_j - d_j = n - k - 1$. Thus $m_j < n - k$. As a result, $x_j$ is $n$ if and only if $m_j = n - k$.

    This allows the location of $n$ to be determined when $|\Out(c_j)| \geq k + 1$ for all $j$. Moreover, if all but one of the $|\Out(c_j)| \geq k + 1$, then the same analysis applies to all but one of the $x_j$. As a result, $n$ may be located if it is one of these $x_j$. If it is not one of these, as $n$ must lie in an OC, it must be the only remaining $x_j$.
\end{proof}

If at least two OCs $c_j$ have $|\Out(c_j)| \leq k$, it turns out that the configuration for $T$ is very limited, at least when $n \geq k^2 +2k$, which is the lower bound for recovering the shape of $T$ from the shapes of $\M_k(T)$, by Equation \eqref{equation: Monks shape determination bound}.

\begin{lemma} \label{lemma: bad shape with small outer area}
    Let $n,k\in\N$ and $T\in\YT(n)$ with OCs $c_1, \ldots, c_\ell$. Suppose $|\Out(c_j)| \leq k$ for at least two OCs. Then $|T| \leq k^2 +2k$. Equality holds if and only if $T$ has shape $((k+1)^k, k)$, \emph{i.e.}, the shape of a $(k+1)\times (k+1)$ square with the lower-right cell removed; see Figure~\ref{figure: k+1 square minus lower right corner}.
\end{lemma}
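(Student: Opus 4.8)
The plan is to notice that the statement depends only on the shape $\lambda$ of $T$, so it reduces to a purely combinatorial fact about partitions. First I would record the basic translation: if an outer corner $c$ sits in row $r$, then $\In(c)$ is the solid $r \times \lambda_r$ rectangle (the rows above $c$ are at least as long, since $\lambda$ is weakly decreasing), so $|\In(c)| = r\lambda_r$; and because $\In(c)$ and $\Out(c)$ partition the cells of $T$, the hypothesis $|\Out(c)| \le k$ is equivalent to $|\In(c)| \ge n - k$. Now fix two outer corners satisfying the hypothesis. Since distinct outer corners are totally ordered, I may name them $P = (a,b)$ and $Q = (a',b')$ with $a < a'$ and $b > b'$, where $b = \lambda_a$ and $b' = \lambda_{a'}$.

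Next I would decompose the diagram using the two inner rectangles $\In(P) = [1,a]\times[1,b]$ and $\In(Q) = [1,a']\times[1,b']$. Their intersection is the $a \times b'$ rectangle; $\In(P)\setminus\In(Q)$ is the horizontal ``arm'' $[1,a]\times[b'+1,b]$ of size $a(b-b')$; and $\In(Q)\setminus\In(P)$ is the vertical arm $[a+1,a']\times[1,b']$ of size $(a'-a)b'$. Writing $u$ for the number of remaining cells of $T$ (those lying in $\Out(P)\cap\Out(Q)$), this gives the exact count
\[ n = ab' + a(b-b') + (a'-a)b' + u. \]
The crucial observation is that the horizontal arm together with the leftover cells lies inside $\Out(Q)$, while the vertical arm together with the leftover lies inside $\Out(P)$; since each of these outer areas has at most $k$ cells, I obtain $a(b-b') + u \le k$ and $(a'-a)b' + u \le k$.

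From here the bound falls out. Because $b - b' \ge 1$, the first inequality forces $a \le k$; because $a'-a \ge 1$, the second forces $b' \le k$; hence $ab' \le k^2$. Substituting the two arm-plus-leftover bounds into the displayed formula yields $n \le k^2 + 2k$. For the equality case I would trace the inequalities backwards: $n = k^2 + 2k$ forces $ab' = k^2$ (so $a = b' = k$, since both factors are at most $k$), forces $u = 0$, and forces each arm to have size exactly $k$, which pins down $b = a' = k+1$. With $u = 0$ the diagram equals $\In(P)\cup\In(Q)$, which is precisely the shape $((k+1)^k, k)$. The reverse implication is a one-line check that this shape has two outer corners, each with outer area of size exactly $k$.

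I expect the only real obstacle to be identifying the correct decomposition. A naive attempt to bound $n$ directly is circular: because $\In(P)$ already accounts for all but at most $k$ cells of $T$, such an approach only recovers an inequality of the form $n \le n + O(k)$. The essential move is to extract bounds on the two \emph{step sizes} $a$ and $b'$ separately from the arm inequalities, rather than trying to bound the total area $n$ in one stroke; once the two inner rectangles are decomposed into overlap, arms, and leftover as above, the remaining estimates and the equality analysis are routine.
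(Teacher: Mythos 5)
Your proof is correct, and it takes a genuinely different route from the paper's. The paper argues by shape modification: it moves all cells lying outside $\In(c)\cup\In(d)$ into a width-one strip below $\In(c)$ (forming a diagram $T_1$), then replaces $T_1$ by the rectangle-minus-corner shape $((p+1)^q,p)$, where $p$ counts the cells below the upper corner and $q$ the cells to the right of the lower corner (forming $T_2$), checking that the size weakly increases at each step while the two distinguished corners keep outer area at most $k$, and finally maximizes $pq+p+q$ over $p,q\le k$. You instead give a static, exact decomposition: with corners $P=(a,b)$ and $Q=(a',b')$, you write $n=ab'+a(b-b')+(a'-a)b'+u$, observe that each arm together with the $u$ leftover cells lies in a single outer area, extract $a(b-b')+u\le k$ and $(a'-a)b'+u\le k$, and hence $a\le k$, $b'\le k$, and $n\le ab'+2k-u\le k^2+2k$. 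Each approach buys something: the paper's compression argument is visual and reusable as a general ``push to an extremal configuration'' technique, but it leaves several verifications implicit (that the moves yield valid diagrams with outer areas unchanged or decreased, that $|T_2|\ge|T_1|$), and the ``only if'' half of the equality claim really requires tracing that each move preserves size only in the extremal situation, a point the paper glosses over. Your counting argument settles the inequality and both directions of the equality case in one pass, with every step an explicit computation; it even yields the slightly sharper bound $n\le k^2+2k-u$, which makes transparent why equality forces $u=0$ and pins the shape down to $((k+1)^k,k)$.
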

\begin{figure}[H]
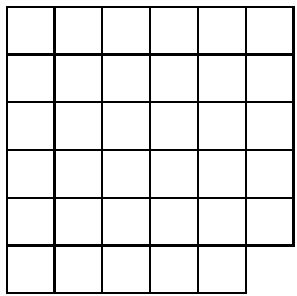

    \centering
    \ydiagram{6,6,6,6,6,5}
    \caption{Largest shape with multiple OCs such that $|\Out(x_j)| \leq k$, where $k=5$}
    \label{figure: k+1 square minus lower right corner}
\end{figure}

\begin{proof}
    Let cells $c, d$ be two OCs of $T$ with outer area at most $k$. We will give an algorithm that modifies $T$ by moving cells and possibly increasing $|T|$ while retaining at least two OCs with outer area at most $k$. The algorithm will end with the shape $((k+1)^k, k)$ to finish the proof.

    We may assume that $c$ is below $d$. Begin by moving all cells of $T$ that are not in either $\In(c)$ or $\In(d)$ below $\In(c)$ in the shape of a rectangle of width $1$, see Figure \ref{figure: move cells pic 1}. Write $T_1$ for the new Young diagram.
    \begin{figure}[H]
    \centering
    \ytableaushort
        {\none, \none\none\none d, \none, \none c, \none }
        * {5,4,3,2,1}
        * [*(lightgray)]{4+1,4+0,2+1,2+0,0+1}
    \;\;\; $\longrightarrow$ \;\;\;\;\;\;\;\;
    \ytableaushort
        {\none, \none\none\none d, \none, \none c, \none, \none, \none }
        * {4,4,2,2,1,1,1}
        * [*(lightgray)]{4+0,4+0,2+0,2+0,0+1,0+1,0+1}
    \caption{$T \longrightarrow T_1$}
    \label{figure: move cells pic 1}
    \end{figure}

    Note that $|T_1|=|T|$. If $c$ is not in the first column, then $c,d$ remain OCs with $|\Out(c)|$ and $|\Out(d)|$ unchanged.
    If $c$ is in the first column, then $d$ remains an OC with $|\Out(d)|$ unchanged. However, the other OC shifts to the bottom of the first column. Its outer area may decrease. In this case, relabel this new OC as $c$.

    Next, let $p$ be the number of cells below $d$ and $q$ be the number of cells to the right of $c$. Form the Young diagram $T_2$ to be of shape
    $((p+1)^q,p)$, see Figure \ref{figure: move cells pic 2}.
    \begin{figure}[H]
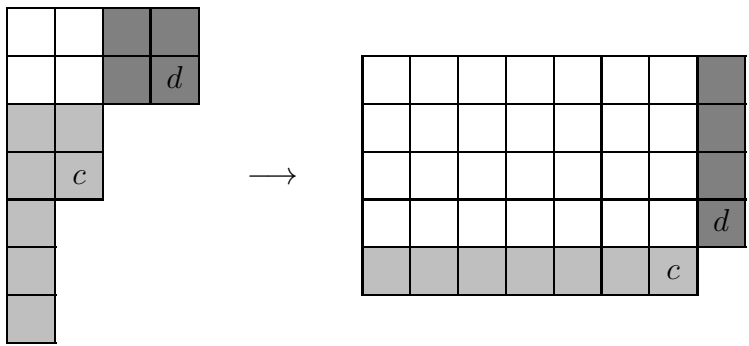

    \centering
    \ytableaushort
        {\none, \none\none\none d, \none, \none c, \none, \none, \none }
        * {4,4,2,2,1,1,1}
        * [*(lightgray)]{4+0,4+0,0+2,0+2,0+1,0+1,0+1}
        * [*(gray)]{2+2,2+2,2+0,2+0,1+0,1+0,1+0}
    $\longrightarrow$ \;\;\;\;\;
    \ytableaushort
        {\none, \none, \none,
        \none\none\none\none\none\none\none d,
        \none\none\none\none\none\none c}
        * {8,8,8,8,7}
        * [*(lightgray)]{8+0,8+0,8+0,8+0,0+7}
        * [*(gray)]{7+1,7+1,7+1,7+1,7+0}
    \caption{$T_1 \longrightarrow T_2$}
    \label{figure: move cells pic 2}
    \end{figure}

    In $T_2$, label the bottom OC $c$ and the top OC $d$. Observe that $|T_2|=pq+p+q \geq |T_1|$,  and that
    $|\Out(c)|$ and $|\Out(d)|$ do not change.

    Finally, look at all possible Young diagrams consisting of a rectangle minus its lower-right cell, whose two outer areas are at most $k$. The maximum area is achieved for $p=q=k$ by the shape $((k+1)^k, k)$.
\end{proof}

We are left with discussing the YT with shape $((k+1)^k, k)$ that arises in Lemma \ref{lemma: bad shape with small outer area}, see Figure \ref{figure: k+1 square minus lower right corner}. We start with an auxiliary result.

\begin{lemma} \label{lemma: movement of n-k in column/row}
    Let $k\in\N$, $n = (k+1)^2 -1$, and $T\in\YT(n)$ with shape $((k+1)^k, k)$. Then the value of each OC is at least $n-k$. The set $\M_k(T)$ determines whether each OC sits in $[n-k,n-2]$, or $[n-1,n]$. Moreover, in the first case, the exact value of the OC is determined.
\end{lemma}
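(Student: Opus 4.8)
The plan is to translate all three assertions into statements about \emph{where the maximum entry $n-k$ of a $k$-minor is allowed to sit}, using that this maximum is the image of the largest surviving entry of $T$. Write $c$ for the bottom cell of column $k+1$ (in row $k$) and $d$ for the right cell of row $k+1$ (in column $k$); these are the only two OCs, with entries $x_c$ and $x_d$. First I would dispose of the bound $n-k$: here $\In(c)$ is exactly rows $1,\dots,k$ and $\In(d)$ is exactly columns $1,\dots,k$ together with row $k+1$, so $|\In(c)|=|\In(d)|=k^2+k=n-k$, and since every entry of an inner area is at most the OC entry we get $x_c,x_d\ge n-k$. Put $a=n-x_c$ and $b=n-x_d$, both in $[0,k]$. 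Since $x_c=\max\In(c)$, the $a$ entries exceeding $x_c$ are precisely the ``large'' entries lying in $\Out(c)=$ row $k+1$; symmetrically the $b$ entries exceeding $x_d$ are exactly the entries of $\Out(d)=$ column $k+1$. Because $n$ occupies an OC, exactly one of $a,b$ is $0$.

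For the two remaining claims about $c$ I track how high in column $k+1$ a minor's maximum can be pushed (the ``movement'' of the title). For the \emph{upper bound} I construct a minor attaining row $\max(1,a)$: first delete the $a$ entries of row $k+1$ that exceed $x_c$, after which $x_c$ is the global maximum; then use the remaining $k-a$ deletions to slide $x_c$ straight up column $k+1$, each deletion removing the current bottom cell of the column and lifting $x_c$ by one row. This yields a $k$-minor whose maximum (necessarily $n-k$) occupies $(\max(1,a),k+1)$. The identical construction slides $x_d$ leftward along row $k+1$ and places the maximum at $(k+1,\max(1,b))$.

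For the matching \emph{lower bound} I would invoke the monotonicity of jeu de taquin paths: when an entry $u$ is deleted, every cell vacated by the resulting slide, and in particular the terminal cell that is removed, carried an entry larger than $u$. Suppose a $k$-minor has its maximum at $(i,k+1)$. No entry ever enters column $k+1$ (slides move entries only up or left), so column $k+1$ always consists of entries $\le x_c$; hence any surviving entry $>x_c$ would be the true maximum and would lie outside column $k+1$, a contradiction, so all $a$ large entries must have been deleted. Moreover, with the maximum at $(i,k+1)$ the cells $(i+1,k+1),\dots,(k,k+1)$ are removed, i.e. $k-i$ cells of column $k+1$; by path monotonicity each such removal deletes an entry strictly smaller than the column-$(k+1)$ entry it reached, hence strictly smaller than $x_c$, so these $k-i$ deletions are disjoint from the $a$ deletions of large entries. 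Thus $a+(k-i)\le k$, giving $i\ge a$, so the least attainable row is exactly $\max(1,a)$. I expect this lower bound to be the main obstacle — specifically, ruling out that deleting a large entry could itself remove a column-$(k+1)$ cell — and the path-monotonicity observation is precisely what closes it, since a deletion reaching column $k+1$ must have removed an entry of value at most $x_c$ and so cannot be one of the large entries.

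Finally, since $\M_k(T)$ records the position of the maximum in each minor, it determines the least row $r_c=\max(1,a)$ at which a minor's maximum lands in column $k+1$, and symmetrically the least column $r_d=\max(1,b)$ in row $k+1$. Now $r_c=1$ holds exactly when $a\in\{0,1\}$, i.e. when $x_c\in[n-1,n]$, while $r_c\ge 2$ forces $a=r_c$ and hence $x_c=n-r_c\in[n-k,n-2]$ exactly; the same dichotomy applied to $r_d$ handles $d$. This yields the classification of each OC into $[n-k,n-2]$ or $[n-1,n]$ together with the exact value in the former case, completing the proof.
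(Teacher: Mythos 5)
Your proposal is correct and takes essentially the same approach as the paper: both characterize each OC by the extremal position at which the value $n-k$ (the minor's maximum) can be pushed within the OC's column (resp.\ row), prove attainability by deleting the entries exceeding the OC and then sliding it upward (resp.\ leftward), and prove optimality by the same count --- deletions of entries larger than the OC never move the column, and each remaining deletion lifts the maximum by at most one cell --- which your disjointness/path-monotonicity argument packages slightly more carefully than the paper's entry-tracking does. One trivial slip: when $a=0$ you cannot spend all $k$ remaining deletions sliding $x_c$ up (only $k-1$ upward slides exist), so the last deletion must be made in row $k+1$; your stated conclusion $\bigl(\max(1,a),k+1\bigr)$ already accounts for this.
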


        \begin{figure}[H]
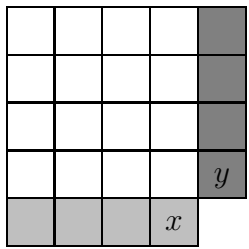

    \ytableaushort
        {\none, \none, \none,
        \none\none\none\none y,
        \none\none\none x}
        * {5,5,5,5,4}
        * [*(lightgray)]{5+0,5+0,5+0,5+0,0+4}
        * [*(gray)]{4+1,4+1,4+1,4+1,4+0}
    \caption{$T$, where $k=4$}
    \label{figure: T where k=4}
    \end{figure}

\begin{proof}
    $T$ has the shape of a $(k+1) \times (k+1)$ square minus its lower-right cell; see Figure \ref{figure: T where k=4}.
    Label the lower OC as $x$ and the upper OC as $y$. Observe that $n$ must be one of them. Write $X$ for the set of cells of $T$ that are in the same row as $x$ and $Y$ for the set of cells of $T$ in the same column as $y$. As the analyses for $x$ and $y$ are similar, we will work only with~$y$.

    The fact that each OC is at least $n-k$ follows from the fact that elements larger than $y$ can only appear in the outer area $\Out(y) =X$, where $|X|=k$.

    First observe that when $y$ is either $n$ or $n-1$, it is possible to find some $T'\in\M_k(T)$ with an $n-k$ in any cell of $Y$ by first deleting $n$ if $y= n-1$ and then deleting upper cells in $Y$ followed by cells in $X$.

    If $y=n-2$, then finding some $T'\in\M_k(T)$ with an $n-k$ in a cell of $Y$ would require some cell of $Y$ to be labeled $n-i$ with $2\leq i \leq k$ at the start of the deletion process, where the label becomes an $n-k$ in $Y$ by the end of the process.
    This requires $k-i$ cells to be deleted which are less than $n-i$ and all $i$ cells to be deleted which are greater than $n-i$, where some cells above $n-i$ may slide out of $Y$ or be deleted. Since the cells larger than $n-i$ will not cause $n-i$ to slide, the farthest $n-i$ can slide and become an $n-k$ is $k-i$ cells up.
    When $i=2$, $n-2$ sits at the bottom of $Y$ and, at most, could result in a label $n-k$ up to cell number $1+(k-2)=k-1$ of $Y$, counting from the bottom. When $i>2$ and $n-i$ sits in the $j$th cell of $Y$, counting from the bottom, then $i\geq 1+j$. In this case, at most, the label $n-k$ could also only reach cell number $j + (k-i) \leq k-1$ of $Y$.
    Moreover, by deleting the appropriate number of cells in $Y$ and $X$, an $n-k$ may be achieved in any of the cells of $Y$ from the bottom up to position $k-1$.

    A similar analysis holds for $y=n-i_0$, $2\leq i_0 \leq k$ except that a label $n-k$ will only appear in any of the cells of $Y$ from the bottom up to position $k-i_0 +1$.
\end{proof}

In addition, for $k\ge 2$, we can show that the location of $n$ in $T$ is determined by the set of $k$-minors $\M_k(T)$.

\begin{theorem} \label{theorem: n position}
    Let $k\in\N_{\ge 2}$, $n = (k+1)^2 -1$, and $T\in\YT(n)$ with shape $((k+1)^k, k)$. Then $\M_k(T)$ determines the location of $n$.
\end{theorem}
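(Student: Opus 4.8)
The plan is to reduce, using the preceding results, to one genuinely ambiguous configuration and then to break the remaining symmetry by removing the single largest entry.

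First I would apply Lemma~\ref{lemma: movement of n-k in column/row} to both outer corners $x$ and $y$. That lemma determines, for each OC, whether its value lies in $[n-k,n-2]$ or in $[n-1,n]$, and pins down the exact value in the former case. Since $n$ must occupy an OC, we have $n\in\{x,y\}$, so the only situation in which the location of $n$ is not already forced is when \emph{both} values lie in $[n-1,n]$, i.e.\ $\{x,y\}=\{n-1,n\}$: if one OC value is identified as $\leq n-2$, then $n$ must be the other OC. Thus I would reduce to deciding which of $x,y$ carries $n$.

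The key observation is that removing the largest entry separates the two possibilities at the level of shape. If $n$ sits at the upper corner $y=(k,k+1)$, then $\R_n T$ has shape $\sigma_1=((k+1)^{k-1},k,k)$, whereas if $n$ sits at the lower corner $x=(k+1,k)$, then $\R_n T$ has shape $\sigma_2=((k+1)^k,k-1)$; note that $\sigma_2=\sigma_1^{t}$, and that $\sigma_1$ and $\sigma_2$ differ precisely in that the last column has length $k-1$ (resp.\ $k$) while the last row has length $k$ (resp.\ $k-1$). By Lemma~\ref{lemma: removal of top elements and Mk} (applied with $d=n$) we have $\M_k(\R_n T)=\R_{n-k}\M_k(T)$, and the right-hand side is computable directly from $\M_k(T)$ by deleting the maximal entry $n-k$ from each minor. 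Hence it suffices to recover the shape of $\R_n T$ from the set $\M_k(\R_n T)$.

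The hard part will be exactly this recovery, and it is where the hypothesis $k\geq 2$ enters. Since $|\R_n T|=k^2+2k-1$ is one less than Monks's shape-determination bound in~\eqref{equation: Monks shape determination bound}, shape recovery is not automatic; in fact the set of minor \emph{shapes} alone does not suffice, because $\sigma_1$ and $\sigma_2$ are transposes and carry the same set of attainable minor shapes (one checks this already fails to separate them when $k=2$). I would therefore argue with the full \emph{labeled} minor set, exploiting that $\sigma_1$ and $\sigma_2$ differ only in which of the last row and the last column is the longer. Concretely, I would compare the families of $k$-minors of $\R_n T$ that strip away the entire last column against those that strip away the entire last row, and show that the longer direction forces a labeled minor—for instance one in which the maximal surviving entry sits atop a full-length column, with no transpose analogue along a full-length row—that is attainable for one of $\sigma_1,\sigma_2$ but provably not for the other. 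Exhibiting such a transpose-asymmetric labeled minor, and verifying it is genuinely unattainable on the opposite side (for every admissible filling of $\R_n T$), is the principal obstacle. The restriction $k\geq 2$ is essential here: for $k=1$ one has $|\R_n T|=2$, and the two candidate shapes $(2)$ and $(1,1)$ are indistinguishable from the single shape of a $1$-minor, so the argument necessarily breaks down.
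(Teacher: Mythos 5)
Your opening reduction coincides with the paper's: by Lemma~\ref{lemma: movement of n-k in column/row}, the only unresolved case is when the two OCs of $T$ hold $n-1$ and $n$. After that you diverge, and this is where the gap lies. Your plan---pass to $\M_k(\R_n T) = \R_{n-k}\M_k(T)$ via Lemma~\ref{lemma: removal of top elements and Mk} and decide whether $\R_n T$ has shape $\sigma_1 = ((k+1)^{k-1},k,k)$ or $\sigma_2 = ((k+1)^k,k-1)$---is a legitimate reduction in the sense that knowing the shape of $\R_n T$ would locate $n$. But the decisive step, actually distinguishing $\sigma_1$-fillings from $\sigma_2$-fillings by their labeled $k$-minor sets, is never carried out: you describe the kind of ``transpose-asymmetric labeled minor'' you would search for and then concede that exhibiting it, and proving it unattainable for every admissible filling on the other side, ``is the principal obstacle.'' In this regime that step \emph{is} the theorem; what you have is a plan whose core is missing, not a proof.

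There is also a structural reason to doubt the plan as stated. Passing from $\M_k(T)$ to $\R_{n-k}\M_k(T)$ deletes the maximal entry $n-k$ from every minor, and the position of $n-k$ inside the minors is precisely the information the paper's proof exploits. The paper first locates the cell $c^*$ of $T$ holding $n-k$, using the set $\mathcal{L}$ of cells whose value is at least $n-2k$ in every minor in which they exist and equals $n-2k$ in some minor; it then takes a minor $T'$ in which $c^*$ holds $n-2k$, so that only entries smaller than $n-k$ were deleted, and shows that such deletions preserve the relative row-order of the two largest surviving labels: $n$ sat in the lower OC of $T$ if and only if $n-k$ lies in a row strictly below $n-1-k$ in $T'$. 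Your truncation throws away exactly these top entries. Worse, since jeu de taquin commutes with transposition, the transpose of any configuration with $n$ in the upper OC is a legitimate configuration with $n$ in the lower OC, and its minor set is the transposed minor set; so any invariant separating the two cases must be genuinely label-asymmetric, and it is not established (nor obvious) that such an invariant survives in the lossy data $\M_k(\R_n T)$. To complete your argument you would either have to solve this (possibly unsolvable) reduced problem, or retreat to working with $\M_k(T)$ itself---which is what the paper does.
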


\begin{proof}
    By Lemma \ref{lemma: movement of n-k in column/row}, we are done if any of the OCs of $T$ sits in $[n-k,n-2]$.
    Thus, it remains to determine the location of $n$ when $n-1$ and $n$ are the OCs of $T$. Let $\mathcal{L}$ be the set of cells, $c$, satisfying:
    \begin{itemize}
    \item[$(1)$] For all $T'\in\M_k(T)$ for which $c$ exists, its value is at least $n-2k$
    and
    \item[$(2)$] there exist $T'\in\M_k(T)$ for which $c$ has the value $n-2k$.
    \end{itemize}

    First of all, we claim that the cell $c^*$ of $T$ holding $n-k$ lies in $\mathcal{L}$. Condition (1) is automatic here.
    For Condition (2), observe that $c^*$ fails to be an OC when $k\ge 2$. Thus, the size of the complement of $\In(c^*)$ is at least $2k$. Since there are exactly $k$ elements greater than $n-k$, this leaves at least $k$ elements smaller than $n-k$ in the complement of $\In(c^*)$ that can be deleted without moving the cell $c^*$.

        \begin{figure}[H]
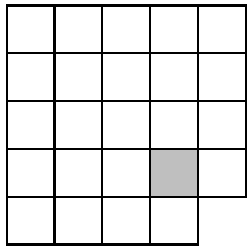

    \ytableaushort
        {\none, \none, \none, \none, \none}
        * {5,5,5,5,4}
        * [*(lightgray)]{5+0,5+0,5+0,3+1,4+0}
    \caption{The cell adjacent to both OCs for $k=4$}
    \label{figure: critical cell}
    \end{figure}

    Next, we claim that if $c$ is not the cell adjacent to both OCs (see Figure \ref{figure: critical cell}), then $c \in \mathcal{L}$ if and only if its value is $n-k$. Suppose $c \in \mathcal{L}$ and $c$ is not adjacent to both OCs. Write $i_0$ for its value.
    If $i_0 > n-k$, then property (2) would be violated. Thus $i_0 \le n-k$. In particular, $c$ is not an OC.
    Next, observe that the size of the complement of $\In(c)$ is at least $2k+1$.
    If $i_0 < n-2k$, deleting any $k$ of the cells in the complement of $\In(c)$ would violate number (1).
    If $i_0 = n-k-i$ for $1\leq i \leq k$, then the complement of $\In(c)$ contains at least $(2k+1)-(k+i) = k + 1 - i$ elements smaller than $i_0$. Deleting $k$ elements from the complement of $\In(c)$, including at least $k+1-i$ that are smaller than $i_0$, reduces the cell's value to at most $n-2k-1$, which violates (1). Therefore, $i_0 = n - k$.

    As a result, it is possible to tell the location $c^*$ of $n-k$ in $T$. If there is a cell in $\mathcal{L}$ that is not adjacent to both OCs, $n-k$ is there. Otherwise, it is in the cell adjacent to both OCs.

    Now pick some $T'\in\M_k(T)$ for which $c^*$ has the value $n-2k$. Any such $T'$ results from $T$ by deleting $k$ cells less than $n-k$ without moving the cell $c^*$. During this deletion process, the entries $n-1$ and $n$ may be moved and are reduced by $k$. However, their relative position is preserved in the following sense.
    If $n-1$ begins in the lower OC of $T$, then, after $i$ deletions, $n-1-i$ will always be in a row weakly below the row of $n-i$. If $n-1$ begins in the upper OC of $T$, then $n-1-i$ will always be in a row strictly above the row of $n-i$. These statements are easily verified by induction.
    For example, if $n-1-i$ is in the same row as $n-i$ after $i$ deletions, then $n-1-i$ must be in the cell immediately to the left of $n-i$. After the next deletions, $n-1-(i+1)$ can only be in a row above $n-(i+1)$ if the jeu de taquin process moves the cell with entry $n-1-i$ up.
    However, in that case, the box above $n-i$ is smaller than $n-1-i$ and would slide over to the left, a contradiction. The other cases are similar.

    As a result, the relative positioning of $n-1-k$ and $n-k$ in $T'$ determines the original positioning of $n-1$ and $n$ in $T$. In particular, $n$ is in the lower OC of $T$ if and only if $n-k$ is in a row strictly below $n-1-k$ in $T'$.
\end{proof}

Combining these results gives the following.

\begin{corollary} \label{cor: location of n}
    Let $n,k\in\N$, $k\ge 2$, and $T\in\YT(n)$.
    Then $\M_k(T)$ determines the location of $n$ when $n \geq k^2 + 2k$.
\end{corollary}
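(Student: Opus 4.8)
The plan is to treat this corollary as an assembly of the preceding results, splitting on whether the bound $n \geq k^2 + 2k$ is strict or tight. First I would dispose of the strict case: when $n \geq k^2 + 2k + 1$, the entry $n$ lies in the range $[(k+1)^2, n]$, since $(k+1)^2 = k^2 + 2k + 1 \leq n$. Hence Lemma~\ref{lemma:location of k2 + 2k through n} applies directly and gives that $\M_k(T)$ determines the location of $n$. This leaves only the boundary value $n = k^2 + 2k$ to handle, which is precisely the threshold at which the shape of $T$ is recoverable from $\M_k(T)$ but Lemma~\ref{lemma:location of k2 + 2k through n} no longer applies.

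For the boundary case $n = k^2 + 2k$, I would run a dichotomy on the outer corners of $T$ according to the sizes of their outer areas. If at most one OC $c_j$ satisfies $|\Out(c_j)| \leq k$, then Lemma~\ref{lemma: outer corner counting} applies verbatim and the location of $n$ is recovered. Otherwise, at least two OCs have outer area at most $k$, and Lemma~\ref{lemma: bad shape with small outer area} forces $|T| \leq k^2 + 2k$; since here $|T| = n = k^2 + 2k$, equality holds and $T$ must have the exceptional shape $((k+1)^k, k)$.

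In this last situation the key observation is the arithmetic identity $n = k^2 + 2k = (k+1)^2 - 1$, which places $T$ in exactly the configuration covered by Theorem~\ref{theorem: n position}. Invoking that theorem (here is where the hypothesis $k \geq 2$ is used) gives that $\M_k(T)$ determines the location of $n$, completing the final case and hence the proof.

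I would expect no genuine obstacle in the corollary itself, since all the substantive work is carried by the four preceding results; the only care needed is to verify that the cases are exhaustive and that the boundary value $n = k^2 + 2k$ matches the hypotheses of both Lemma~\ref{lemma: bad shape with small outer area} and Theorem~\ref{theorem: n position}. The mild subtlety worth flagging is that the ``bad'' shape singled out by the outer-corner count is precisely the one whose $n$-position is resolved by the more delicate argument of Theorem~\ref{theorem: n position}, so that the coarse counting tool and the fine positional argument dovetail exactly at the threshold $n = k^2 + 2k$.
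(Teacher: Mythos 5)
Your proposal is correct and is precisely the argument the paper intends: the paper's proof is simply ``Combining these results,'' and your case split ($n \geq k^2+2k+1$ handled by Lemma~\ref{lemma:location of k2 + 2k through n}; at the boundary $n = k^2+2k$, the outer-corner dichotomy via Lemma~\ref{lemma: outer corner counting} and Lemma~\ref{lemma: bad shape with small outer area}, with the exceptional shape $((k+1)^k,k)$ resolved by Theorem~\ref{theorem: n position} using $k^2+2k = (k+1)^2-1$ and $k \geq 2$) is exactly how those results dovetail. No gaps; your write-up just makes explicit what the paper leaves implicit.
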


\section{Sharp Bound for $k=2$}
\label{sec:sharp bound k=2}
	
	In this section, we prove that $\M_2(T)$ determines $T$ when $n \geq 8$.
	This result is sharp for $k = 2$ as the mapping $T \longmapsto \M_2(T)$ is not injective when $n = 7$.
	For example, the $2$-minors of the tableaux in Figure \ref{figure: same 2-minors} are identical.
	\begin{figure}[H]
		\centering
		\begin{ytableau}
			1 & 2 & 5& 7 \\
			3 & 4 & 6
		\end{ytableau}
		\;\;\;\;\;\;\;
		\begin{ytableau}
			1 & 3 & 5& 7 \\
			2 & 4 & 6
		\end{ytableau}
		\caption{Tableaux with identical sets $\M_2(T)$}
		\label{figure: same 2-minors}
	\end{figure}
	Of note, the mapping $T \longmapsto \mM_2(T)$ is injective for $n$ equal to $6$ and $7$. This can be verified by a straightforward computer calculation as multisets are easy to distinguish. Injectivity for this mapping fails at $n = 5$.

	\begin{theorem} \label{thm: k = 2 sharp}
		Let $n \in \N$ and $T \in \YT(n)$. Then $\M_2(T)$ determines $T$ when $n \geq 8$.
	\end{theorem}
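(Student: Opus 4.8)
The plan is to reconstruct $T \in \YT(n)$ from $\M_2(T)$ by locating every entry, working from the largest entries downward, and leaning on the structural results of the preceding sections to handle the large entries essentially for free. Since $n \geq 8 > 2^2 + 2 \cdot 2 = 8$ is the shape-determination bound for $k=2$, Monks's bound \eqref{equation: Monks shape determination bound} gives us the shape of $T$ immediately, and Lemma~\ref{lemma:location of k2 + 2k through n} pins down the location of every entry in $[(k+1)^2, n] = [9, n]$ as soon as $n \geq k^2 + 2k + 1 = 9$. Combined with Corollary~\ref{cor: location of n}, which handles the location of $n$ even in the boundary case $n = k^2 + 2k = 8$, this means that for $n \geq 8$ the entries $9, 10, \ldots, n$ (and $n$ itself when $n=8$) are already determined. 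So the whole problem reduces to locating the small entries $1, 2, \ldots, 8$, a finite and $n$-independent task.

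The key reduction I would exploit is Lemma~\ref{lemma: removal of top elements and Mk}, namely $\M_k(\R_{[d,n]} T) = \R_{[d-k, n-k]} \M_k(T)$. Once the locations of $9, \ldots, n$ are known, I can peel them off one at a time: knowing the cell holding $n$ lets me compute $\R_n T$, and the lemma lets me extract $\M_2(\R_n T)$ from $\M_2(T)$ by applying $\R_{n-2}$ to each minor. Iterating this down to $d = 9$ produces $\M_2(\R_{[9,n]} T)$, the set of $2$-minors of the tableau $\R_{[9,n]} T \in \YT(8)$ obtained by stripping off all entries above $8$. Therefore it suffices to prove the single base case that $\M_2$ determines every tableau of size exactly $8$ — i.e. that the map $T \mapsto \M_2(T)$ is injective on $\YT(8)$. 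The main theorem then follows by this inductive peeling argument.

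For the base case on $\YT(8)$, I would mix the identification lemmas with the specific combinatorics of size-$8$ tableaux. Lemmas~\ref{lemmma: construct In(m) when m in same place} and \ref{lemmma: general construct In(m) when m in same place and nothing could slide} are the workhorses: if some minor in $\M_2(T)$ retains a high-valued OC $m$ in place, then all of $\In(m)$ is reconstructible, so I would try to locate a large surviving entry (using the SOC/outer-area counting ideas from Lemma~\ref{lemma: outer corner counting}) and recover everything weakly northwest of it, then recurse on the remaining small entries. The delicate part is distinguishing tableaux that agree on all their large entries and differ only in the placement of $\{1, \ldots, 4\}$ or so — precisely the phenomenon illustrated by the $n=7$ counterexample in Figure~\ref{figure: same 2-minors}, where swapping the relative positions of $2$ and $3$ is invisible to $\M_2$. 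The whole point of the bound $n \geq 8$ is that this ambiguity is resolved once the tableau is large enough to force a surviving corner whose inner area captures the contested small cells.

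I expect the main obstacle to be exactly this base-case injectivity on $\YT(8)$: unlike the asymptotic arguments, it cannot be reduced to shape and large-entry bookkeeping, and the small-entry configurations must be separated by hand. Concretely, the hard work is showing that for any two distinct $T_1, T_2 \in \YT(8)$ of the same shape and agreeing on entries $5$–$8$, there is a choice of two cells to delete that produces a minor distinguishing them — a finite but fiddly case analysis over shapes of size $8$. Since $|\YT(8)|$ is small, a clean approach is to argue structurally wherever the identification lemmas apply and fall back on an exhaustive finite check (as the paper does for the multiset cases) only for the residual shapes where no surviving high corner is forced.
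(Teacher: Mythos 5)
Your reduction framework is exactly the paper's: Lemma~\ref{lemma:location of k2 + 2k through n} locates the entries in $[9,n]$, Corollary~\ref{cor: location of n} covers $n=8$, and Lemma~\ref{lemma: removal of top elements and Mk} converts $\M_2(T)$ into $\M_2(\R_{[9,n]}T) = \R_{[7,n-2]}\M_2(T)$, reducing the theorem to the single claim that $T \mapsto \M_2(T)$ is injective on $\YT(8)$. This is precisely how the paper deduces Theorem~\ref{thm: k = 2 sharp} from Lemma~\ref{lemma: n = 8 lemma for k = 2}, and up to this point your argument is correct (modulo the slip ``$n \geq 8 > 2^2+2\cdot 2 = 8$''; Monks's bound \eqref{equation: Monks shape determination bound} holds with equality at $n=8$, which is all you need).

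The genuine gap is that the base case on $\YT(8)$, which is the entire substance of the theorem, is only sketched. The paper spends all of Section~\ref{sec:sharp bound k=2} on it: it first shrinks the problem further, using Lemma~\ref{Monks 3.6} and Corollary~\ref{cor: location of n} to pin down the locations of $7$ and $8$ inside $T \in \YT(8)$, so that only the size-$6$ tableau $T' = \R_{[7,8]}T$ remains, and then disposes of the six possible shapes of $T'$ one by one in a sequence of lemmas beginning with Lemma~\ref{hanging shapes}. More importantly, the structural strategy you sketch --- find a surviving high OC, apply Lemma~\ref{lemmma: construct In(m) when m in same place}, and ``recurse on the remaining small entries'' --- cannot be carried through as stated, for a reason the paper makes explicit: once you descend to small tableaux, the set of $2$-minors of the \emph{small} tableau is provably not always sufficient, since sizes $6$ and $7$ lie below the reconstruction threshold (Figure~\ref{figure: same 2-minors} is exactly such a failure). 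For the shapes $(3,3)$ and $(4,2)$ the paper must exploit the richer information in $\M_2(T)$ itself rather than $\M_2(T')$ --- for instance, in Lemma~\ref{3-3} it characterizes the $2$-minors of $T$ in which the cell that held $7$ now holds $6$, and shows these recover $\M_1(T')$ --- so any recursion that forgets $\M_2(T)$ and retains only minors of smaller tableaux gets stuck precisely at the hard cases. Your fallback of an exhaustive injectivity check on $\YT(8)$ would in principle close the gap, since that is a finite computation, but you have not carried it out; note also that the paper's computer verifications are reserved for the multiset statements at $n = 6, 7$, while the set-version base case is proved entirely by hand.
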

	
	By Lemma \ref{lemma:location of k2 + 2k through n}, $\M_2(T)$ determines the location of $[9, n]$. Thus, only the location of $[1, 8]$ remains to be determined.
	Lemma \ref{lemma: removal of top elements and Mk} shows that $\M_2(R_{[9, n]} T)$ equals $R_{[7, n-2]} \M_2(T)$. Thus Theorem \ref{thm: k = 2 sharp} follows immediately from the following.
	
	\begin{lemma} \label{lemma: n = 8 lemma for k = 2}
		Let $T \in \YT(8)$. Then $\M_2(T)$ determines $T$.
	\end{lemma}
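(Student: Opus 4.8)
The plan is to reconstruct $T$ by locating its entries one value at a time, working downward from $8$. For $n=8$ and $k=2$ the shape of $T$ is already determined by $\M_2(T)$ (since $8=k^2+2k$, by Equation~\eqref{equation: Monks shape determination bound}), and the location of $8$ is determined by Corollary~\ref{cor: location of n}. The shapes consisting of a single row or single column force $T$ completely, so I would dispose of these first and henceforth assume $T$ has at least two outer corners.

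The central tool I would use is the behavior of an outer corner across minors. If $c$ is an OC of $T$ with entry $m$, then no cell ever slides into $c$, so in any $T'\in\M_2(T)$ in which $c$ survives its entry there equals $m$ minus the number of deleted cells whose entry was below $m$. Hence the \emph{maximum} entry appearing at the cell $c$ over all minors in which $c$ survives equals $m$ exactly when $T$ has at least two cells exceeding $m$, i.e.\ when $m\le 6$; this recovers the true value of every outer corner whose value is at most $6$. For an outer corner carrying $7$ or $8$ this maximum is only $6$, so the genuinely large corners are flagged, and since the location of $8$ is already known, any remaining flagged corner must carry $7$. This is exactly the bookkeeping of Lemma~\ref{lemma: outer corner counting}, which I would invoke to make the argument rigorous and to handle the choice of which larger cells to delete while keeping $c$ alive.

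Once the largest outer-corner value $m_0\le 6$ is pinned to its cell $c_0$, applying Lemma~\ref{lemmma: construct In(m) when m in same place} reconstructs every cell of $T$ with entry below $m_0$; the more general Lemma~\ref{lemmma: general construct In(m) when m in same place and nothing could slide} covers the variant in which a value has been pinned to a cell that is not literally an outer corner but to which nothing can slide. I would iterate this ``strip from the bottom'' step, and combine it with the peeling identity $\M_2(\R_{[d,8]}T)=\R_{[d-2,6]}\M_2(T)$ of Lemma~\ref{lemma: removal of top elements and Mk} to pass between $T$ and the tableaux obtained by removing its top entries. After this, the only entries that can remain ambiguous are the few values lying strictly between $m_0$ and $8$ that occupy interior (non-corner) cells, so the problem collapses to a finite list of shapes (the partitions of $8$), each with a bounded number of undetermined entries that I would separate by exhibiting an explicit distinguishing minor.

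The hard part will be the configurations in which the top entries $7$ and $8$ are concentrated at corners (so the two leading outer corners carry exactly $\{7,8\}$) or in which $7$ sits interior just above a small pinned corner: here the downward shape-peeling is unavailable, since the shape of a tableau of size $6$ or $7$ need not be determined by the shapes of its $2$-minors (shape determination is only guaranteed for $n\ge k^2+2k=8$), and the inner-area lemmas cannot fire near the top because there are too few larger cells to delete. This is precisely the phenomenon behind the size-$7$ example of Figure~\ref{figure: same 2-minors}, where the \emph{set} of $2$-minors fails to determine the tableau, so the crux is to show that the extra cell present when $n=8$ always breaks that ambiguity. I would do this by producing a minor in which the relative vertical order of the images of $7$ and $8$ (or of $7$ and a pinned smaller entry) becomes visible, in the spirit of the relative-position tracking of Theorem~\ref{theorem: n position}. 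I anticipate that a small number of these top-heavy cases must be checked by hand or confirmed by the computer calculations already mentioned, after which assembling them with the uniform ``pin the corners, strip the bottom'' procedure completes the proof.
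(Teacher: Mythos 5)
Your opening moves match the paper's (shape of $T$ from \eqref{equation: Monks shape determination bound}, location of $8$ from Corollary~\ref{cor: location of n}), but the machinery you build on top of them fails at several concrete points. First, the max-entry criterion does not do what you claim: an outer corner holding $6$ also has maximum surviving entry $6$ (delete $7$ and $8$), exactly like a corner holding $7$ or $8$, so your ``flagged'' corners are ambiguous between $6$ and $7$ even after the $8$ is located. For instance, in the tableau of shape $(4,3,1)$ with rows $(1,2,3,6)$, $(4,5,7)$, $(8)$, both non-$8$ corners have maximum surviving entry $6$, and your rule cannot say which one holds $7$; this is why the paper's Lemma~\ref{lemma: outer corner counting} works with \emph{minimum} surviving entries instead. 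Second, your reduction to tableaux with at least two outer corners is false: rectangles such as $(4,4)$ or $(2,2,2,2)$ are not single rows or columns, yet have a unique OC, which necessarily holds $8$; there corner-pinning recovers nothing and the ``strip from the bottom'' procedure never starts. Third, $7$ need not sit at a corner at all (shape $(4,3)$ with $7$ ending the second row, or $(4,4)$ with $7$ in the top row), and you have no mechanism for locating an interior $7$. Your stated reason for abandoning shape-peeling --- that shapes of size $6$ need not be determined by the shapes of their $2$-minors --- is incorrect: Monks's refined criterion (Lemma~\ref{Monks 3.6}) shows that for $k=2$ only size $7$ is exceptional among sizes $6$ and $7$, and the paper exploits exactly this to determine the shape of $\R_{[7,8]}T$ and hence the location of $7$ in all cases.

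Most seriously, the configurations you correctly identify as the crux --- $7$ and $8$ concentrated at or near the corners, or $7$ interior --- are precisely the content of the paper's lemmas for the shapes $(3,3)$ and $(4,2)$: distinguishing the five shape-$(4,3)$ tableaux of Figure~\ref{figure: 5 tableaux}, and the argument that the set $\mathcal{L}$ of $2$-minors carrying a $6$ in the cell of $7$ satisfies $\{\R_6 S : S \in \mathcal{L}\} = \M_1(T')$, which reduces the $(3,3)$ case to the $k=1$ reconstruction theorem. You defer all of this to being ``checked by hand or confirmed by the computer calculations already mentioned,'' but no such check is specified or carried out, and these cases constitute the bulk of the proof (the size-$7$ counterexample of Figure~\ref{figure: same 2-minors} shows they cannot be handled by any soft argument). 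As it stands, the proposal is a reasonable opening plus an accurate diagnosis of where the difficulty lies, but its general-purpose tools are partly wrong and partly insufficient, and the hard cases remain unproved.
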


The proof of Lemma \ref{lemma: n = 8 lemma for k = 2} (which will occupy the rest of this section) breaks naturally into a number of cases according to the shape of $T$, which is known from $M_2(T)$. However, we can significantly reduce the number of cases to check by \cite[Lemma 3.6]{Monks2009}.

\begin{lemma}[\cite{Monks2009}] \label{Monks 3.6}
	Let $n,k\in\N$ and $T\in\YT(n)$. Then $M_2(T)$ determines the shape of $T$ if $n$ cannot be expressed as $n=(a+1)b+c-1$ for $a,b,c\in\N$ satisfying $a\le c\le k$ and $b+ (c\, \mod a) \le k$.
\end{lemma}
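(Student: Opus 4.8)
The plan is to reduce the statement to a purely combinatorial fact about partitions, and then to an arithmetic packing problem. The first observation is that the only information about the shape of $T$ that I need is the \emph{set of shapes} occurring in $\M_k(T)$. Deleting one cell via jeu de taquin removes a removable corner of the current shape, so the shapes occurring among the $k$-minors of $T$ are exactly the partitions $\mu\subseteq\lambda$ with $|\mu|=n-k$, where $\lambda=\operatorname{shape}(T)$ (every such $\mu$ is reached by peeling off $k$ corners one at a time). Hence it suffices to prove: if $n$ is not of the stated form, then no two distinct partitions $\lambda\neq\lambda'$ of $n$ share the same set of size-$(n-k)$ subpartitions. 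This is precisely the set-version of the partition reconstruction problem, and I will prove the contrapositive: from a ``confusable'' pair I will read off admissible parameters $a,b,c$.

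Second, I would pin down exactly when two partitions are confusable. Writing $\nu=\lambda\wedge\lambda'$ (entrywise minimum) and $\rho=\lambda\vee\lambda'$ (entrywise maximum), a subpartition of size $n-k$ is contained in both $\lambda$ and $\lambda'$ if and only if it is contained in $\nu$. Moreover, a size-$(n-k)$ subpartition $\mu\subseteq\lambda$ retaining a given cell $x=(i,j)$ exists if and only if the full rectangle $\In(x)$, of size $ij$, fits into a size-$(n-k)$ partition, i.e.\ if and only if $ij\le n-k$. Consequently the two minor sets coincide exactly when no cell of $\lambda/\nu$ or $\lambda'/\nu$ can be retained, that is, when every cell $(i,j)$ of the skew band $\rho/\nu=(\lambda/\nu)\sqcup(\lambda'/\nu)$ satisfies $ij\ge n-k+1$. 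This criterion I expect to be straightforward to establish, and it is the engine of the whole argument.

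Third --- and this is where the real work lies --- I would convert the existence of such a band into the arithmetic normal form. The condition $ij\ge n-k+1$ forces all differing cells into a thin hyperbolic band of depth roughly $n$, so $\rho/\nu$ is highly constrained: in each row the extra cells belong entirely to $\lambda$ or entirely to $\lambda'$, and the configuration is controlled by its two extreme corners, with inner rectangles $i_1\times j_1$ and $i_2\times j_2$ where $i_1<i_2$ and $j_1>j_2$. Computing the minimal $|\nu|$ compatible with a prescribed pair of deep corners, together with the ``padding'' $P$ allowed before the depth bound $ij\ge n-k+1$ fails, yields an expression $n=i_1j_1+(i_2-i_1)j_2+P-1$ for the size and the two inequalities $i_1(j_1-j_2)+P\le k$ and $(i_2-i_1)j_2+P\le k$. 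Introducing $a$ as the relevant width, $b$ as the number of full rows of the differing strip, and $c$ as its total length, the identity $n=(a+1)b+c-1$ together with $a\le c\le k$ and $b+(c\bmod a)\le k$ should emerge after matching variables, the term $c\bmod a$ accounting for the incomplete final row when a length-$c$ strip is packed into rows of width $a$.

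The main obstacle is precisely this last translation: verifying that \emph{every} confusable configuration --- including multi-cell strip differences and the transpose-symmetric cases --- reduces to a single strip-packing governed by $(a,b,c)$, and that the geometric inequalities match the stated arithmetic ones \emph{exactly}, not merely up to constants. I would organize this by first normalizing away the conjugation symmetry $\lambda\leftrightarrow\lambda^{t}$, then reducing an arbitrary confusable pair to a ``tight'' one using the meet/join operation, and finally treating the cases $a\mid c$ and $a\nmid c$ separately, since divisibility is what determines whether the remainder $c\bmod a$ actually contributes to the second inequality. I anticipate that bookkeeping these boundary cases correctly, rather than the conceptual setup, will consume most of the effort.
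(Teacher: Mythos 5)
The paper never proves this lemma: it is imported from Monks (2009), where it is Lemma~3.6, and is used here as a black box (the statement's ``$M_2$'' is evidently a typo for $\M_k$). So your attempt has to stand on its own as a reconstruction of Monks's argument. Its first two steps do stand. The set of shapes occurring in $\M_k(T)$ is exactly $\{\mu \subseteq \lambda : |\mu| = n-k\}$, where $\lambda$ is the shape of $T$: each jeu de taquin deletion removes an outer corner of the current shape, and conversely any saturated chain from $\lambda$ down to such a $\mu$ is realized by repeatedly deleting the cell sitting at the corner to be removed (no slides occur). Hence the lemma reduces to reconstructing a partition from its set of size-$(n-k)$ subpartitions. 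Your confusability criterion is also correct: $\lambda \neq \lambda'$ of size $n$ have identical sets of size-$(n-k)$ subpartitions if and only if every cell $(i,j)$ of their symmetric difference satisfies $ij \geq n-k+1$, by the rectangle argument in both directions.

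The gap is everything after that, and it is precisely the content of the lemma: the passage from ``a confusable pair exists'' to the arithmetic form $n=(a+1)b+c-1$ with $a\le c\le k$ and $b+(c \bmod a)\le k$. You posit the identity $n=i_1j_1+(i_2-i_1)j_2+P-1$ and the inequalities $i_1(j_1-j_2)+P\le k$ and $(i_2-i_1)j_2+P\le k$ without derivation, and the variable matching is left at ``should emerge.'' It does not emerge by the direct matching you describe. Take $\lambda=(4,4,1)$, $\lambda'=(4,3,2)$, $n=9$, $k=4$: this is a confusable pair (the differing cells are $(2,4)$ and $(3,2)$, with $ij$ equal to $8$ and $6$, both $\geq n-k+1=6$), and both of your inequalities hold with $P=0$ ($4\le 4$ and $2\le 4$), yet reading off $a=j_1-1=3$, $b=i_1=2$, $c=(i_2-i_1)j_2+P=2$ produces a triple violating $a\le c$. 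A valid triple does exist --- $(a,b,c)=(2,2,4)$, coming from the conjugate matching $a+1=i_2$, $b=j_2$, $c=i_1(j_1-j_2)+P$ --- but proving that one of the two conjugate matchings \emph{always} satisfies all three constraints simultaneously, including $b+(c\bmod a)\le k$ in the wrap-around case $a\nmid c$ that you explicitly set aside, is the actual theorem; it occupies a substantial part of Monks's paper. Until that case analysis is carried out, what you have is a correct reduction plus a conjecture about how the bookkeeping resolves, not a proof.
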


Applying Lemma~\ref{Monks 3.6} to the choice $n=6, k=2$ gives that $M_2(R_{[7,8]}T)$ determines the shape of $R_{[7,8]}T$ (hereafter referred to as $T'$). Since 7 and 8 are in the complement of $T'$ and the location of 8 is determined from $M_2(T)$ by Corollary \ref{cor: location of n},  the location of 7 in $T$ is also determined. Thus it only remains to show that $M_2(T)$ determines $T'$. Recall that Lemma  \ref{lemma: removal of top elements and Mk} gives the 2-minors of $T'$ as $M_2(T')=R_{[5,6]}M_2(T)$.
	
	Up to symmetry, there are 6 shapes of $T'$ to consider:
	$(6)$, $(5,1)$, $(4,2)$, $(4,1,1)$, $(3,3)$, and $(3,2,1)$.  

\begin{lemma} \label{hanging shapes}
	Let $T'$ be of shape $(6)$, $(5,1)$, or  $(4,1,1)$. Then $\M_2(T')$ determines $T'$.
\end{lemma}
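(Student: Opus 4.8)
Since the shape of $T'$ is already known from $\M_2(T')$, the plan is to pin down the entries cell by cell. In all three shapes the corner $(1,1)$ is forced to hold $1$, and each shape consists of a single row (the ``arm'') together with a first column (the ``leg'') meeting at that corner. Thus $T'$ is determined once the leg entries are known: shape $(6)$ has empty leg and so is the unique tableau $1\,2\,3\,4\,5\,6$; shape $(5,1)$ is determined by the single entry $v$ at $(2,1)$; and shape $(4,1,1)$ is determined by the two entries $p<q$ at $(2,1),(3,1)$. So the task reduces to recovering these leg entries from $\M_2(T')$ (where $T'\in\YT(6)$ over $[1,6]$).

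For the generic situation I would recover the whole leg at once from its bottom cell. Let $m$ be the entry of the bottom leg cell, which is an OC. If the arm contains at least two entries exceeding $m$, then deleting two such entries leaves the bottom leg cell fixed with value $m$: deletions in a single row only rearrange the arm and leave the first column untouched, and since both deleted values exceed $m$ the value $m$ is never reduced. This produces a $T''\in\M_2(T')$ in which the bottom leg cell survives as an OC labeled $m$, and Lemma~\ref{lemmma: construct In(m) when m in same place} then reconstructs $\In(m)$, which for the bottom of the first column is the entire leg together with the corner. Since every entry other than $1$ and the leg entries lies in the arm, the arm contains at least two entries exceeding $m$ precisely when $m\le 4$; this settles $v\le 4$ for shape $(5,1)$ and $q\le 4$ for shape $(4,1,1)$.

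This leaves the boundary cases in which the leg contains the large entries $5$ and/or $6$: namely $v\in\{5,6\}$ for $(5,1)$, and $q\in\{5,6\}$ for $(4,1,1)$. Here only finitely many tableaux remain, and the plan is to separate them by exhibiting a single minor occurring for one candidate but not the other. For instance, for $(5,1)$ the tableau with $v=5$ produces the $2$-minor with first row $1\,2\,4$ and second row $3$, whereas the tableau with $v=6$ has every minor of shape $(3,1)$ equal to the one with first row $1\,2\,3$ and second row $4$; comparing these separates $v=5$ from $v=6$. For the seven remaining $(4,1,1)$ tableaux I would argue similarly, first recovering $p$ in the cases $q=5$ by deleting the bottom cell together with the entry $6$ and applying Lemma~\ref{lemmma: general construct In(m) when m in same place and nothing could slide} to the now-exposed middle leg cell, and otherwise deleting the corner $1$ and tracking the jeu de taquin slide of the arm past the leg (this already separates, e.g., leg $\{4,5\}$ from leg $\{4,6\}$).

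The main obstacle is exactly this boundary analysis, and the reason it is unavoidable is structural. When $n=6$ the maximal entry $6$ lies in an OC with empty outer area, and both OCs of these shapes have empty outer area, so neither Lemma~\ref{lemma: outer corner counting} nor the inner-area lemmas apply; indeed $6$ is reduced in every $2$-minor and so can never be located by ``surviving with its own value.'' Hence the large-entry cases must be distinguished by a finer, ad hoc comparison of minor sets, and the delicate point is to verify that the distinguishing minor of one candidate does not arise coincidentally, through a different sequence of slides, for another. I expect the hardest single instance to be the leg $\{5,6\}$ in shape $(4,1,1)$, where the inner-area lemma gives no information about either leg entry and the separation rests entirely on this explicit comparison.
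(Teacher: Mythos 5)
Your overall strategy---pin down the leg entries via extremal surviving labels in the bottom outer corner, then separate finitely many boundary cases by exhibiting distinguishing minors---is similar in spirit to the paper's proof (which works along the top row instead), but it has a genuine gap, and it sits exactly where you predicted the delicacy would be. Lemma~\ref{lemmma: construct In(m) when m in same place} requires $m$ to be a \emph{known} entry of the cell $c$, and your generic step never explains how the reconstructor learns $m$. The only certificate available from $\M_2(T')$ is the existence of a minor in which $c$ carries a given label, and this certificate fails at the value $4$: for shape $(5,1)$, the minor with first row $1,2,3$ and second row $4$ belongs to $\M_2(T')$ whether $v=4$ (delete $5$ and $6$), $v=5$ (delete $6$ and $3$), or $v=6$ (delete any two first-row cells). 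So seeing the bottom cell survive with label $4$ does not tell you its true entry, and your claim that the generic step ``settles $v\le 4$'' is wrong: it settles only $v\le 3$, and the boundary set is $\{4,5,6\}$, not $\{5,6\}$. This omission then infects the boundary analysis. Your proposed separator for $v=5$, the minor with rows $(1,2,4)$ and $(3)$, also arises when $v=4$ (delete $3$ and $5$), so it cannot distinguish $v=4$ from $v=5$---precisely the ``coincidental slide'' phenomenon you flagged as the delicate point but did not check. The same problem occurs for shape $(4,1,1)$: the tableaux with $q=4$ (two of them) are not covered by either your generic step or your boundary list, so nine tableaux remain rather than seven; and for those seven your treatment is only an ``e.g.''\ sketch, with the appeals to Lemmas~\ref{lemmma: construct In(m) when m in same place} and \ref{lemmma: general construct In(m) when m in same place and nothing could slide} facing the same known-entry problem for $p$.

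The gap is repairable. For $(5,1)$, the \emph{minimum} label of the bottom cell over minors of shape $(3,1)$ equals $v-2$ when $v\in\{4,5,6\}$ (for instance, $v=4$ admits the minor with rows $(1,3,4)$ and $(2)$, which $v=5$ and $v=6$ do not), so the maximum and minimum labels together determine $v$; the nine $(4,1,1)$ cases then need analogous explicit checks. This is essentially why the paper's argument is phrased in terms of minima rather than maxima: by restricting to minors whose top row has shrunk by two cells, every deleted cell necessarily comes from the top row, so the surviving label in position $i$ is always one of $\alpha_i$, $\alpha_{i+1}-1$, $\alpha_{i+2}-2$, each at least $\alpha_i$, and the minimum therefore \emph{certifies} the true value---the certification that your maximum-based statistic lacks.
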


\begin{proof}
Let $T'$ have top row of length $r$. If $r=6$, there is nothing to prove, as there is only one tableau of shape $(6)$. Otherwise, label the top row entries of $T'$ as $\alpha_1, \ldots, \alpha_{r}$ and look at any $S \in \M_2(T')$ with a top row of length $r-2$. Since nothing can slide up during deletion, the only possible values that could appear in $S$ in the box originally holding $\alpha_i$, for $1 \leq i \leq r - 2$, are $\alpha_i, \alpha_{i+1} -1$, and $\alpha_{i+2} -2$, where $\alpha_i\le \alpha_{i+1} -1 \le \alpha_{i+2} -2$. By deleting the entries $\alpha_{r-1}$ and $\alpha_r$ from $T'$, the minimal value of $\alpha_i$ is always obtained. Therefore $\alpha_i$ is determined by $\M_2(T')$ for all $1 \leq i \leq r - 2$.

If $2 \leq i \leq r - 2$, then deleting $\alpha_{2}$ and $\alpha_{3}$ from $T'$ yields a 2-minor which achieves the maximal value of $\alpha_{i + 2} - 2$ in position $i$ of the first row, determining $\alpha_{i+2}$ to be 2 more than this maximal entry. Therefore, $\alpha_i$ is determined by $\M_2(T')$ also for all $4 \leq i \leq r$.

When $r=5$, i.e., $T'$ is of shape $(5,1)$, the entire top row of $T'$ is now determined. The single remaining entry is also determined, and therefore all of $T'$ is determined from $\M_2(T')$.

When $r=4$, it remains to determine $\alpha_3$. Once $\alpha_3$ is found, the remaining values are known and their positions are determined by strict monotonicity of the first column. If $\alpha_4=\alpha_2+2$,  then $\alpha_3=\alpha_2+1$ is uniquely determined by strict monotonicity across the first row, and therefore $T'$ is determined from $\M_2(T')$.

It remains to consider three cases: $\alpha_2=3$ and $\alpha_4=6$; $\alpha_2=2$ and $\alpha_4=5$; and $\alpha_2=2$ and $\alpha_4=6$.

Let us first consider $\alpha_2=3$ and $\alpha_4=6$. In this case $\alpha_1=1$, the middle row entry is 2, and the bottom row entry is either 4 or 5 (whichever $\alpha_3$ is not). If the bottom row entry is 5, then the 2-minor arising from deleting 6 and 2 will have its 4 in the bottom row. If the bottom row entry of $T'$ is 4, there is no such 2-minor -- such a minor would require deleting the first entry of the top row or the unique entry of the middle row, which would result in shifting the 4 up, but it would then be relabeled as something less than 4. Thus, $\M_2(T')$ determines $T'$ in this case.

Now, let us consider $\alpha_2=2$ and $\alpha_4=5$. Note that if $\alpha_3=3$, the only entry that can occur as the second entry of the top row of a 2-minor of shape $(3,1)$ is a 2, as either the 2 has not moved, or either the 1 or 2 has been deleted, and the 3 has slid over and been relabeled 2. However, if $\alpha_3=4$, the value 3 can and does appear as the entry in this position of the 2-minor arising from deleting $6$ and $2$. Thus $\M_2(T')$ determines all of $T'$.

Finally, we consider $\alpha_2=2$ and $\alpha_4=6$. First recall that the only possible values that can appear as the second entry in the top row of some 2-minor of $T'$ of shape $(2,1,1)$ are $2$, $\alpha_3-1$, and $4$. By deleting various combinations of the top row entries $\alpha_2,\alpha_3,$ and $\alpha_4$, we see that each of these values is achievable. Thus, if there are three distinct entries that occur in this position of a 2-minor, we have $2<\alpha_3-1<4$, so $\alpha_3-1=3$, i.e., $\alpha_3=4$. In this case, $T'$ is determined. Otherwise, there are only two distinct values in that position, and thus either $\alpha_3=3$ or $\alpha_3=5$. Then, by examining possible bottom row entries of a 2-minor of shape $(3,1)$ (as in the case $\alpha_2=3$ and $\alpha_4=6$), we can determine $\alpha_3$. In particular, $4$ appears among the possible entry values in this position if and only if $\alpha_3=3$. Thus again $T'$ is determined.
\end{proof}

\begin{lemma}
	Let $T'$ be of shape  $(3,2,1)$. Then $M_2(T')$ determines $T'$.
\end{lemma}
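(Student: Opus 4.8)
The plan is to exploit the fact that the shape $(3,2,1)$ is small enough that, once the location of $6$ is pinned down, each remaining entry can be read off as an extremal value occurring in a fixed cell of the minors. Since $\M_2(T')$ determines the shape of $T'$, we may write the entries of $T'$ as
\begin{center}
\begin{ytableau} 1 & b & c \\ d & e \\ f \end{ytableau}
\end{center}
where necessarily $(1,1)=1$ and $\{b,c,d,e,f\} = \{2,3,4,5,6\}$ subject to $b<c$, $d<e$, $d<f$, and $b<e$. The first row is exactly $\In(c)$ and the first column is exactly $\In(f)$, and the middle cell $e$ is forced once the other four entries are known; so it suffices to recover the first row $(b,c)$ and the first column $(d,f)$. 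The three outer corners are $c=(1,3)$, $e=(2,2)$, $f=(3,1)$, with $|\Out(c)|=|\Out(f)|=3$ and $|\Out(e)|=2$. Thus exactly one outer corner has outer area $\leq k=2$, so Lemma~\ref{lemma: outer corner counting} immediately locates $6$, splitting the argument into the cases $6=c$, $6=e$, or $6=f$.

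With $6$ located, I would recover the first column as follows. Note $\Out(f)=\{b,c,e\}$, and deleting cells from $\Out(f)$ never disturbs the cell $(3,1)$; moreover the surviving value there equals $f$ minus the number of deleted cells smaller than $f$. Hence if at least two of $\{b,c,e\}$ exceed $f$, some minor preserves the value $f$ at $(3,1)$, and Lemma~\ref{lemmma: construct In(m) when m in same place} reconstructs all of $\In(f)$, i.e.\ the whole first column $1,d,f$. A direct check shows this hypothesis holds whenever $f\leq 4$ (and also when $f=6$, which we have already located), since then the two largest values $5,6$ both lie in $\Out(f)$. The symmetric argument with $\Out(c)=\{d,e,f\}$ recovers the first row $1,b,c$ whenever $c\leq 4$ or $c=6$. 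Combining the recovered first row and first column then forces $e$, completing those cases.

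The main obstacle is the residual case in which a corner value equals $5$ while $6$ sits elsewhere: then only a single larger cell is available in the relevant outer area, so no minor can preserve the corner value and the reconstruction lemma does not apply. Here I would instead read the entry off from the \emph{minimum} value attained in that corner across all minors, solving for the original entry using the count of deletable smaller cells in the outer area (a quantity that is itself determined once the location of $6$ and the achievable corner values are known). The delicate subcase is $6=e$, where both $c$ and $f$ may equal $5$, so that neither corner is preserved and the two middle values compete for the same positions; distinguishing the few tableaux that differ only in the relative order of these middle values will require comparing the values achievable in cells $(1,3)$ and $(3,1)$ simultaneously, tracking the jeu de taquin relabeling carefully. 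I expect this bookkeeping — rather than any conceptual difficulty — to be the crux of the proof.
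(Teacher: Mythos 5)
Your opening moves do match the paper: exactly one outer corner of the shape $(3,2,1)$ (the middle-row corner) has outer area at most $2$, so Lemma~\ref{lemma: outer corner counting} locates the entry $6$; and when a corner entry is at most $4$, both $5$ and $6$ lie in its outer area, so some $2$-minor preserves that corner and Lemma~\ref{lemmma: construct In(m) when m in same place} recovers the corresponding row or column. The genuine gap is in your ``residual case,'' which is precisely where the content of the lemma lies, and the invariants you propose there provably cannot finish the job. Take the paper's top pair in Figure~\ref{figure: (3,2,1) part 2}: $T_1$ with rows $(1,3,5),\,(2,4),\,(6)$ and $T_2$ with rows $(1,3,4),\,(2,5),\,(6)$. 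Both have $6$ in cell $(3,1)$. A direct check of all deletion sequences shows that the set of values occurring in cell $(1,3)$ over all $2$-minors is $\{3,4\}$ for \emph{both} tableaux, and the set of values occurring in cell $(3,1)$ is $\{4\}$ for both (any surviving $2$-minor of that corner must delete two cells smaller than $6$). So maxima, minima, and full value sets at both corners coincide. In particular your rule ``original entry $=$ minimum attained value $+$ number of deletable smaller cells in the outer area'' is circular: that count is $2$ for $T_1$ and $1$ for $T_2$, yet the data you claim determines it (the location of $6$ and the achievable corner values) are identical for the two tableaux. Comparing the two corners ``simultaneously'' is also vacuous: the possible shapes of $2$-minors of $(3,2,1)$ are $(3,1)$, $(2,2)$, and $(2,1,1)$, and none of these contains both corner cells, so no single minor ever witnesses both values at once.

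A second inaccuracy: the delicate pairs are not confined to the subcase $6=e$. They occur for every location of $6$ (this is exactly the three pairs in Figure~\ref{figure: (3,2,1) part 2}); for instance with $6$ in the bottom corner, the tableaux with rows $(1,2,4),\,(3,5),\,(6)$ and $(1,2,5),\,(3,4),\,(6)$ have $2$-minor sets sharing five of their six elements, distinguished only by the shape-$(2,2)$ minor with rows $(1,3),(2,4)$ versus the shape-$(3,1)$ minor with rows $(1,3,4),(2)$ --- information invisible to corner values. What is needed, and what the paper's proof uses, is the \emph{non-corner} content of minors of a prescribed shape: $T_1$ above has a $2$-minor of shape $(3,1)$ with bottom-row entry $3$ (delete the $2$ and the $6$), whereas $T_2$ has none, because in such a minor the corner would be an SOC labeled $4$, whence Lemma~\ref{lemmma: construct In(m) when m in same place} forces every entry less than $4$ to sit in its original position and the bottom entry would have to be $2$. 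This kind of existence criterion lies outside the toolkit your proposal commits to, so the acknowledged ``crux'' is not deferred bookkeeping but a missing idea.
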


\begin{proof}
	Note that $T'$ has three OCs, and that the entries in these OCs are $5$ and $6$, and one of $3$ and $4$. Note that only one OC has outer area less than or equal to 2. As such, by Lemma \ref{lemma: outer corner counting}, the location of the 6 is known.
	
	Fix an OC and consider the set of entries that occur in that SOC in a 2-minor. It is easy to see that 4 is a possibility if and only if the OC was originally occupied by $4,5$ or 6.
	
	Thus, if there is an SOC in which no 2-minor has a 4, this cell must be occupied in $T'$ by 3. By strict monotonicity, this cannot be the OC in the middle row, and thus either the first column or top row (depending on which OC is under consideration) is determined, as the other entries in this column/row must be 1 and 2. Since the location of 6 is known, 5 must be in the remaining OC, 4 in the sole remaining cell, and $T'$ is fully determined.

	Otherwise, the OCs of $T'$ are occupied by $4,5,$ and 6, and the entries 2 and 3 must be located in the two cells of $T'$ right next to the 1 in the upper left corner of $T'$. In order to proceed, note that any 2-minor of $T'$ in which 4 remains labeled 4 must preserve every label less than 4 by Lemma \ref{lemmma: construct In(m) when m in same place}. Hence there is exactly one such 2-minor of $T'$ (obtained by deleting the 5 and 6), and all its entries agree with those of $T'$. On the other hand, for every OC of $T'$, deleting the other two OCs results in a 2-minor where this OC is an SOC labeled 4 with every label less than 4 preserved. Thus, there exists at least one OC of $T'$ with a unique 2-minor where this OC is labeled 4, and any such 2-minor will provide us with the correct location of every label less than 4 in $T'$. By symmetry, we may focus now on the case that the label 2 is located in the cell below 1, see Figure \ref{figure: (3,2,1) part 1}. If 2 is located in the cell to the right of 1 instead, just flip the tableau over.

    \begin{figure}[H]
		\centering
    \ytableaushort
        {1 3 \none, 2 \none,
        \none}
        * {3,2,1}
		\caption{The position of $1,2,$ and 3 in $T'$}
		\label{figure: (3,2,1) part 1}
	\end{figure}

    Since the location of 6 is known, it remains to locate the entries 4 and 5 in $T'$. We are left with considering three pairs of tableaux as depicted in Figure \ref{figure: (3,2,1) part 2}. Each pair corresponds to a different location of the entry 6. In each case we need to distinguish between the left and right tableau on basis of their 2-minors.

    \begin{figure}[H]
		\centering
      \ytableaushort
        {1 3 5, 2 4,
        6}
        * {3,2,1}
        * [*(lightgray)]{2+1,1+1,0+0}
		\;\;\;\;\;\;\;
    \ytableaushort
        {1 3 4, 2 5,
        6}
        * {3,2,1}
        * [*(lightgray)]{2+1,1+1,0+0}
  \\[.5cm]
  \par\noindent\rule{\textwidth}{0.4pt}
  \\[.5cm]
    \ytableaushort
        {1 3 5, 2 6,
        4}
        * {3,2,1}
        * [*(lightgray)]{2+1,2+0,0+1}
		\;\;\;\;\;\;\;
    \ytableaushort
        {1 3 4, 2 6,
        5}
        * {3,2,1}
        * [*(lightgray)]{2+1,2+0,0+1}
  \\[.5cm]
  \par\noindent\rule{\textwidth}{0.4pt}
  \\[.5cm]
      \ytableaushort
        {1 3 6, 2 4,
        5}
        * {3,2,1}
        * [*(lightgray)]{2+0,1+1,0+1}
		\;\;\;\;\;\;\;
      \ytableaushort
        {1 3 6, 2 5,
        4}
        * {3,2,1}
        * [*(lightgray)]{2+0,1+1,0+1}
		\caption{The remaining options for the tableau $T'$}
		\label{figure: (3,2,1) part 2}
	\end{figure}

    We start with the top pair in Figure \ref{figure: (3,2,1) part 2}. Note that the left tableau has a 2-minor of shape $(3,1)$ with 3 as the bottom row entry (obtained by deleting the 2 and 6). The right tableau has no such 2-minor. In particular, note that 4 would be an SOC for such a 2-minor. Thus, Lemma \ref{lemmma: construct In(m) when m in same place} applies and the 2-minor must preserve every label less than~4, a contradiction.

    For the other two pairs, similar arguments apply. For the middle pair, the left
    tableau has again a 2-minor of shape $(3,1)$ with 3 as the bottom row entry (obtained by deleting the 2 and 6). For the bottom pair, the left tableau has a 2-minor of shape $(2,1,1)$ with 2 as the right column entry (obtained by deleting the 2 and 6).
	%
 \end{proof}

It is worth pointing out that, in fact, $M_2(T')$ alone is enough to determine $T'$ in the above two lemmas, establishing that there are many elements of $YT(6)$ which are recoverable from their 2-minors. In the remaining lemmas, information about $M_2(T')$ is not sufficient, and we will make use of the additional information provided by $M_2(T)$.

\begin{lemma} \label{3-3}
	Let $T'$ be of shape $(3,3)$. Then $M_2(T)$ determines $T'$.
\end{lemma}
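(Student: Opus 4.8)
The plan is to lean on the facts already in place just before this lemma: by Lemma~\ref{lemma:location of k2 + 2k through n} and Corollary~\ref{cor: location of n}, $M_2(T)$ determines the shape of $T$ and the locations of the entries $7$ and $8$. Since the six cells of $T$ carrying $1,\dots,6$ form a Young subdiagram which, after removing $7$ and $8$, has shape $(3,3)$, they must occupy the top-left $2\times 3$ rectangle; removing $7,8$ (the two largest entries) fixes all smaller entries, so $T'$ is exactly this block, with $1$ in cell $(1,1)$ and $6$ in cell $(2,3)$. As there are only five tableaux of shape $(3,3)$, it suffices to determine the positions of $2$ and $5$, plus one extra bit to separate the two candidates whose top rows are $1\,2\,3$ and $1\,2\,4$.

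First I would dispose of every shape of $T$ except $(4,4)$. For shapes $(5,3),(4,3,1),(3,3,2),(3,3,1,1)$, the cell $(2,3)$ holding $6$ is an outer corner of $T$, so nothing can slide into it; hence the only $2$-minor in which $(2,3)$ still reads $6$ is the one obtained by deleting $7$ and $8$, namely $T'$, which is then the unique $2$-minor of shape $(3,3)$. Applying Lemma~\ref{lemmma: construct In(m) when m in same place} with $m=6$ shows $\In(6)$ — all of $T'$ — is reconstructible, and we simply read it off.

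The genuinely hard case is the rectangle $T=\begin{smallmatrix}1&b&c&7\\ d&e&f&8\end{smallmatrix}$ of shape $(4,4)$, where $7,8$ are forced into $(1,4),(2,4)$. Now $(2,3)$ is not an outer corner, a cell can slide into it, and $T'$ is generally \emph{not} the unique shape-$(3,3)$ minor; in fact the four candidates having the minor $\begin{smallmatrix}1&3\\2&4\end{smallmatrix}$ all share the same set $M_2(T')=R_{[5,6]}M_2(T)$ (by Lemma~\ref{lemma: removal of top elements and Mk}), so $M_2(T')$ alone cannot finish. I would therefore use $M_2(T')$ only to detect whether $T'$ has top row $1\,2\,3$ (the unique shape-$(3,3)$ tableau whose $2$-minors omit $\begin{smallmatrix}1&3\\2&4\end{smallmatrix}$); if that minor is absent we are done. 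Otherwise $T'$ is one of the four tableaux with top rows among $1\,2\,4,\ 1\,2\,5,\ 1\,3\,4,\ 1\,3\,5$, and is pinned down by two membership tests in $M_2(T)$: the entry $5$ lies in cell $(2,2)$ exactly when $\begin{smallmatrix}1&2&3\\4&5&6\end{smallmatrix}$ occurs in $M_2(T)$, and the entry $2$ lies in the top row exactly when a specific shape-$(4,2)$ minor occurs in $M_2(T)$, namely $\begin{smallmatrix}1&2&5&6\\3&4\end{smallmatrix}$ when $5$ is in cell $(1,3)$, and $\begin{smallmatrix}1&2&4&6\\3&5\end{smallmatrix}$ when $5$ is in cell $(2,2)$.

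The crux, and the step I expect to be hardest, is verifying these last membership criteria: that each distinguishing minor appears for one candidate but not its partner. In each pair the two partners differ only by transposing the adjacent entries $2$ and $3$ sitting in the incomparable cells $(1,2)$ and $(2,1)$ — precisely the configuration that is \emph{not} separated by its $2$-minors at size $7$ (Figure~\ref{figure: same 2-minors}). Thus the argument must genuinely use the extra entry $8$ in cell $(2,4)$, which breaks the tie. I would establish the criteria by tracking, via Lemma~\ref{lemmma: general construct In(m) when m in same place and nothing could slide}, exactly which slide paths can deposit the relevant small entries into the outer corner of each candidate minor; presence or absence of that minor then forces the position of $2$. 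This slide-path bookkeeping, rather than any conceptual gap, is the main obstacle.
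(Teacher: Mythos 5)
Your proposal is correct, and all of its specific claims check out, but it takes a genuinely different route from the paper's. The paper splits off only the shape $(5,3)$ (handled by a minimal-entry argument on top rows of shape-$(3,3)$ minors) and treats $(4,4)$, $(4,3,1)$, $(3,3,2)$, $(3,3,1,1)$ uniformly: it defines $\mathcal{L}$ to be the set of $2$-minors in which the cell of $T$ that held $7$ now holds $6$, proves $\{\R_6 S : S\in\mathcal{L}\} = \M_1(T')$ (the delicate point being to rule out minors where $7$ slides and $8$ lands in its cell, a scenario that only threatens in shape $(4,4)$), and then invokes the known fact that a tableau in $\YT(6)$ is determined by its $1$-minors. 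You instead observe that in every admissible shape of $T$ \emph{except} $(4,4)$ the cell $(2,3)$ holding $6$ is an outer corner of $T$, so nothing can slide into it, the unique $2$-minor showing $6$ in that cell is $T'$ itself, and Lemma \ref{lemmma: construct In(m) when m in same place} finishes; this is cleaner than the paper's handling of those four shapes and isolates exactly why $(4,4)$ is the hard case. For $(4,4)$ you then run a finite analysis on the five candidates: detect the superstandard tableau via absence of $\bigl(\begin{smallmatrix}1&3\\2&4\end{smallmatrix}\bigr)$ in $\M_2(T') = \R_{[5,6]}\M_2(T)$, then separate the two remaining pairs by membership tests in $\M_2(T)$. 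What the paper's route buys is uniformity and no enumeration, at the price of leaning on the $k=1$ reconstruction result; what yours buys is elementarity for the easy shapes and a fully explicit algorithm for $(4,4)$, which is forced to use the entry $8$ precisely because the pair $R_8T_B$, $R_8T_D$ is the paper's Figure \ref{figure: same 2-minors} counterexample.

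The one weakness is that you assert the three distinguishing criteria rather than verify them, and for a complete proof that slide-path bookkeeping must actually be done. Fortunately the claims are true, and the checks are short because a first deletion from shape $(4,4)$ must end at $(2,4)$ and a second at $(1,4)$ or $(2,3)$, leaving each candidate with only a handful of shape-$(3,3)$ and shape-$(4,2)$ minors. For instance, in $T_B = \bigl(\begin{smallmatrix}1&2&5&7\\3&4&6&8\end{smallmatrix}\bigr)$ deleting $8$ then $6$ produces the witness $\bigl(\begin{smallmatrix}1&2&5&6\\3&4\end{smallmatrix}\bigr)$, while enumeration shows the six shape-$(4,2)$ minors of $T_D = \bigl(\begin{smallmatrix}1&3&5&7\\2&4&6&8\end{smallmatrix}\bigr)$ all carry $2$ or $3$ in position $(1,2)$ paired with a bottom row other than $3\,4$, so the witness never occurs; similarly, deleting $3$ then the corner entry in $T_A$, or $2$ then the corner entry in $T_C$, produces $\bigl(\begin{smallmatrix}1&2&3\\4&5&6\end{smallmatrix}\bigr)$, whereas the shape-$(3,3)$ minors of $T_B$ and $T_D$ are exactly the four non-superstandard tableaux; and $\bigl(\begin{smallmatrix}1&2&4&6\\3&5\end{smallmatrix}\bigr)$ arises from $T_A$ (delete $8$, then $6$) but from none of the four intermediate $(4,3)$-tableaux of $T_C$. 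With those enumerations written out, your argument is a complete and correct proof of Lemma \ref{3-3}.
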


\begin{proof}
	
	First, consider the case that $T$ is of shape $(5,3)$. Label the first three entries of the top row as $\alpha_1=1$, $\alpha_2$, and $\alpha_3$. By the argument presented in the proof of Lemma \ref{hanging shapes}, $\alpha_2$ and $\alpha_3$ are determined to be the minimal values occurring as the corresponding entries of the top row of a 2-minor of $T$ of shape $(3,3)$. Thus, the top row of $T'$ is known, and since $T'$ has exactly two rows, $T'$ is completely determined by strict monotonicity.
	
	Now, suppose that $T$ is \emph{not} of shape $(5,3)$, i.e., the top row of $T$ has at most 4 entries.	This leaves us with the options $(4,4)$, $(4,3,1)$, $(3,3,2)$, and $(3,3,1,1)$ for the possible shape of $T$, see Figure \ref{figure: 4 tableaux}.

	\begin{figure}[H]
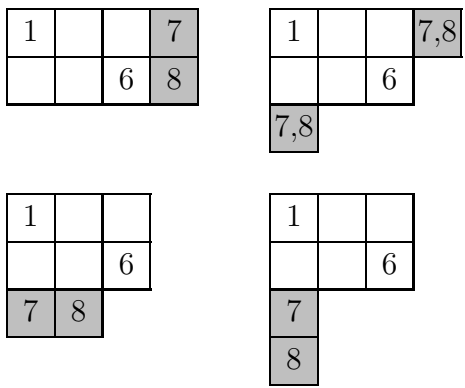

		\centering
    \ytableaushort
        {1 \none\none 7,
        \none\none 6 8, \none}
        * {4,4,0}
        * [*(lightgray)]{3+1,3+1}
		\;\;\;\;\;\;\;
    \ytableaushort
        {1 \none\none {7,\!8}, \none\none 6,
        {7,\!8}}
        * {4,3,1}
        * [*(lightgray)]{3+1,3+0,0+1}
  \\[.5cm]
    \ytableaushort
        {1 \none\none, \none\none 6,
        7 8, \none}
        * {3,3,2,0}
        * [*(lightgray)]{3+0,3+0,0+2}
		\;\;\;\;\;\;\;
    \ytableaushort
        {1 \none\none, \none\none 6, 7,
        8}
        * {3,3,1,1}
        * [*(lightgray)]{3+0,3+0,0+1,0+1}
		\caption{The remaining options for the tableau $T$}
		\label{figure: 4 tableaux}
	\end{figure}
	
	Let $\mathcal L$ be the set of all $S\in M_2(T)$ in which the position of $7$ in $T$ is now occupied by $6$. We will show that  $\{R_{6}S:S\in \mathcal L\}=M_1(T')$. Since $T'\in YT(6)$, this will determine $T'$ by \cite{Monks2009}.
	
	First, note that the deletion of 8 followed by the deletion of some $i$ less than $7$ cannot result in 7 sliding, and therefore results in $7$ being relabeled $6$, i.e., results in a 2-minor $S$ belonging to $\mathcal L$. It is immediate that $R_{6}S$ is precisely the $1$-minor of $T'$ resulting from deletion of~$i$. Thus $\{R_{6}S:S\in \mathcal L\}\supseteq M_1(T')$.
	
	To see the reverse inclusion, let us first observe that there are three possible ways in which a 2-minor of $T$ can belong to $\mathcal L$:
 \begin{itemize}
 \item[$(a)$] 7 does not slide, and is relabeled 6;
 \item[$(b)$] 7 is deleted, leading 8 to slide into its position and be relabeled 6; or
 \item[$(c)$] 7 slides, leading 8 to slide into its position and be relabeled 6.
 \end{itemize}
	
	It is clear that any 2-minor arising in fashion $(b)$ can be realized by deleting 8 rather than 7, and thus arises in fashion $(a)$ as well. Any 2-minor $S$ arising in fashion $(a)$ is the result of deleting 8 and some $i$ less than 7. In this case $R_{6}S$ is precisely the 1-minor of $T'$ given by deletion of $i$, i.e., $R_{6}S\in M_1(T')$.
	
	We complete the proof by demonstrating that there are no members of $\mathcal L$ arising in fashion $(c)$. Suppose that $S$ were such a 2-minor.
	
	Since 7 slides in the creation of $S$, it cannot be in the third row of $T$ from the top, as it would require the deletion of at least three entries for it to slide. Thus 7 is the fourth entry of the top row of $T$. Since 8 slides into the position of 7 in the creation of $S$, $T$ must be of shape $(4,4)$ with 8 as the fourth bottom row entry.
	
	Now, fix $i<7$ and consider the result of deleting $i$ from $T$. It is evident that this culminates in either 6 sliding or being deleted, leading 8 to slide left. It is clear that 8 cannot then slide into the position occupied by 7, a contradiction.	
\end{proof}

Using arguments similar to those above, we are able to prove the following, which will complete the proof of  Lemma  \ref{lemma: n = 8 lemma for k = 2}.

\begin{lemma}
	Let $T'$ be of shape $(4,2)$. Then $M_2(T)$ determines $T'$.
\end{lemma}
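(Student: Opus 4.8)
The plan is to follow the strategy of Lemma~\ref{3-3}, reducing the determination of $T'$ to the recovery of $M_1(T')$ from $M_2(T)$. As in the earlier cases, $M_2(T)$ determines the shape of $T$ and, by Corollary~\ref{cor: location of n} and the discussion preceding Lemma~\ref{hanging shapes}, the cells of $T$ holding $7$ and $8$; these are exactly the two cells outside the $(4,2)$ subshape, with $8$ at an outer corner of $T$ and $7$ at an outer corner of $R_8 T$. Since each of these cells is an outer corner at the moment it is deleted, nothing slides and no entry of $[1,6]$ is relabeled, so the $2$-minor $T-8-7$ leaves every entry in $[1,6]$ fixed in place. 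Thus $T'=T-8-7$ is itself a member of $M_2(T)$, and determining $T'$ is the same as locating $1,\dots,6$ in $T$. Since $M_2(T')=R_{[5,6]}M_2(T)$ is known to be insufficient on its own for shape $(4,2)$, the extra information that $M_2(T)$ records about the positions of $7$ and $8$ in the minors is essential. Enumerating the admissible shapes of $T$ gives the finite list $(6,2),(5,3),(5,2,1),(4,4),(4,3,1),(4,2,2),(4,2,1,1)$, and I would split the argument according to this shape.

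First I would treat $T=(6,2)$, where $7$ and $8$ occupy $(1,5)$ and $(1,6)$. Here the top row of $T$ has length $6$, and the top-row argument of Lemma~\ref{hanging shapes}, applied to the $2$-minors of $T$ of shape $(4,2)$, recovers all four entries $\alpha_1,\dots,\alpha_4$ of the top row of $T'$ as the minimal values appearing in positions $1$ through $4$ (realized by deleting $\alpha_5=7$ and $\alpha_6=8$). Because $T'$ has only two rows, the two bottom-row entries are then forced by strict column monotonicity, and $T'$ is determined.

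For each of the remaining shapes, I would recover $M_1(T')$ exactly as in Lemma~\ref{3-3}: let $\mathcal L$ be the set of $S\in M_2(T)$ in which the cell of $T$ holding $7$ is occupied by $6$, and aim to prove $\{R_6 S : S\in\mathcal L\}=M_1(T')$, which determines $T'\in\YT(6)$ from its set of $1$-minors. As before, a minor lands in $\mathcal L$ in one of three fashions: \emph{(a)} $7$ stays put and is relabeled $6$ (forcing the deletions to be $8$ and some $i<7$, so that $R_6 S = T'-i$); \emph{(b)} $7$ is deleted and $8$ slides into its cell (equally realized by deleting $8$ instead, hence reducing to (a)); or \emph{(c)} $7$ slides out of its cell while $8$ slides into it. Fashions (a) and (b) give the inclusion $\{R_6 S : S\in\mathcal L\}\subseteq M_1(T')$, together with the reverse inclusion for every $i$ at which $7$ does not move.

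The main obstacle, and the feature that makes shape $(4,2)$ genuinely harder than $(3,3)$, is that the second row of $(4,2)$ has length only $2$, so after deleting $8$ the maximal entry $7$ can in fact be forced to slide once the jeu de taquin gap reaches it with no cell beneath the gap; this never happens in the $(3,3)$ analysis. I therefore expect the delicate points to be \emph{(i)} ruling out fashion~(c), which should force $7$ and $8$ into a specific adjacent configuration at the end of a full row or column and then lead to a contradiction as in Lemma~\ref{3-3}; and \emph{(ii)} covering the finitely many pairs (entry $i$, shape of $T$) for which $7$ slides in fashion~(a), by checking directly that the corresponding $1$-minor $T'-i$ is still produced by another $S\in\mathcal L$ (or, when more convenient, by deleting $7$ before $8$ in configurations where it is $8$ that would slide). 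Since the placements of $7$ and $8$ are pinned down for each of the six remaining shapes, this is a finite, shape-by-shape verification; once it is complete, the equality $\{R_6 S:S\in\mathcal L\}=M_1(T')$ holds, and the reconstructibility of $\YT(6)$ from $1$-minors invoked in Lemma~\ref{3-3} finishes the proof.
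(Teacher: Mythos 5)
Your reduction to $M_1(T')$ via the set $\mathcal{L}$ breaks down for several of the shapes on your list, and the failure is not of the kind your repair step (ii) can fix. Take $T$ of shape $(5,3)$, so that $7$ sits in cell $(1,5)$ (row $1$, column $5$) and $8$ in cell $(2,3)$ (the other sub-case, with $7$ in $(2,3)$, fails the same way). Every $S\in\mathcal{L}$ must contain the cell $(1,5)$, and the only shape of size $6$ containing that cell and fitting inside $(5,3)$ is $(5,1)$; moreover the entry $6$ of $S$ occupies $(1,5)$ by the definition of $\mathcal{L}$, so $R_6S$ has shape $(4,1)$ for \emph{every} $S\in\mathcal{L}$. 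On the other hand, $M_1(T')$ always contains minors of shape $(3,2)$ --- for instance $T'-\alpha_4$, since deleting the cell $(1,4)$ of a $(4,2)$-tableau removes a cell from the first row. Hence $M_1(T')\not\subseteq\{R_6S:S\in\mathcal{L}\}$, and no choice of ``another $S\in\mathcal{L}$'' can ever produce the missing minors: the obstruction is one of shape, not of labeling. The same collapse occurs for $T$ of shape $(4,4)$ (there $7$ sits in $(2,3)$, every $S\in\mathcal{L}$ has shape $(3,3)$, so $R_6S$ always has shape $(3,2)$ and all $(4,1)$-shaped $1$-minors of $T'$ are missed), and for the sub-cases of $(5,2,1)$ and $(4,3,1)$ in which $7$ lies in $(1,5)$ or $(2,3)$. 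This is exactly the structural difference from Lemma~\ref{3-3} that you flagged but underestimated: for $T'$ of shape $(3,3)$, whenever the jeu de taquin gap becomes adjacent to the cell of $7$ in $R_8T$, that gap has another, smaller neighbor which slides instead, so $7$ never moves unless deleted and $\mathcal{L}$ captures every $1$-minor; for $(4,2)$, the cell of $7$ can sit at the end of a row with nothing below it, $7$ is then forced to slide for many deletions $i<7$, and its cell is simply absent from the resulting $2$-minor.

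Your method is sound where $7$ lies in cell $(3,1)$ of $R_8T$ --- shapes $(4,2,2)$, $(4,2,1,1)$, and the corresponding sub-cases of $(5,2,1)$ and $(4,3,1)$ --- and your separate treatment of $(6,2)$ is fine (it parallels the paper's handling of $(5,3)$ inside Lemma~\ref{3-3}). But for the failing shapes a genuinely different argument is required, and it is the bulk of the work. The paper never runs the $\mathcal{L}$-reduction for shape $(4,2)$: it first determines $\alpha_2$ from $M_2(T')=R_{[5,6]}M_2(T)$ alone, disposes of $\alpha_2\in\{3,4\}$ by analyzing which entries can occupy given positions in minors of shapes $(2,2)$ and $(3,1)$, and only for $\alpha_2=2$ brings in $M_2(R_8T)=R_6M_2(T)$, split into three cases by the location of $7$ in $R_8T$, using minimal/maximal-entry arguments and, for the hardest case ($R_8T$ of shape $(4,3)$, which covers precisely your problematic shapes $(5,3)$, $(4,4)$, $(4,3,1)$), a direct check distinguishing five explicit tableaux. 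Some replacement of this kind is needed before your proof can go through.
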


\begin{proof}
	Label the entries in the top row of $T'$ as $\alpha_1=1,\alpha_2,\alpha_3,$ and $\alpha_4$. As in the proof of Lemma \ref{hanging shapes}, $\alpha_2$ is determined by investigating 2-minors in $\M_2(T')$ of shape $(2,2)$. Note that if we are able to determine $\alpha_3$ and $\alpha_4$, strict monotonicity along the bottom row will determine the remainder of $T'$.
	
	If $\alpha_2=4$, the entries $\alpha_3=5$ and $\alpha_4=6$ are determined by strict monotonicity, and therefore all of $T'$ is determined.
	
	If $\alpha_2=3$, then we know that $\{\alpha_3,\alpha_4\}\subseteq\{4,5,6\}$. If $4\notin\{\alpha_3,\alpha_4\}$, the bottom row of $T'$ has entries $2$ and $4$. If $S\in M_2(T')$ is of shape $(3,1)$, then the bottom row entry of $S$ is at most 3 (as 4 will be relabeled if it has slid into this position). If $4\in\{\alpha_3,\alpha_4\}$, then the bottom row of $T'$ has entries 2 and either 5 or 6. Either way, there is some $S\in M_2(T')$ of shape $(3,1)$ with 4 as bottom row entry (achieved by deleting both 2 and $\alpha_4$). Thus, $M_2(T')$ determines whether $4\in \{\alpha_3,\alpha_4\}$. If $4\notin\{\alpha_3,\alpha_4\}$, then $\alpha_3=5$, $\alpha_4=6$, and $T'$ is thus fully determined. Otherwise $4\in\{\alpha_3,\alpha_4\}$, and in particular, $\alpha_3=4$. We determine $\alpha_4$ as follows. If $\alpha_4=5$, then the top row of $T'$ is $(1,3,4,5)$. If $S\in M_2(T')$ is of shape $(3,1)$, it must arise from deletion of at most one of $3,4,$ or 5 (and at least one of $1,2,$ or 6), which can only result in a top row of $S$ equal to $(1,2,3)$ or $(1,3,4)$. However, if $\alpha_4=6$, there is a 2-minor of $T'$ of shape $(3,1)$ with top row $(1,2,4)$, which is achieved by deleting $4$ and $2$. Thus, $M_2(T')$ distinguishes these cases as well, and $T'$ is determined.
	
	Finally, we consider the case when $\alpha_2=2$. It is here that we will need the extra information given by $M_2(T)$. For the rest of the proof, we will label the top row entries of $T'$ as $\alpha_1 =1, \alpha_2 =2, \alpha_3,$ and $\alpha_4$.
	
	We begin by considering the case that 7 is the 5th entry in the top row of $R_8T$; see Figure \ref{figure: 7 in T part 1}. Applying an argument similar to that we used in Lemma \ref{hanging shapes} for shape $(4,1,1)$, we can determine $R_8T$ from $M_2(R_8T)$. Specifically, we can determine $\alpha_3$ as the minimal entry that occurs in this position in the top row of 2-minors of $R_8T$ of shape $(3,2)$. Once $\alpha_4$ is known, the second row of $R_8T$ is determined by strict monotonicity. The entry $\alpha_4$ can be found as follows: If there is no 2-minor of $R_8T$ of shape $(4,1)$ with
    5 as the second row entry, then $\alpha_4 =6$. If $\alpha_4 \ne 6$ and there exists some 2-minor of $R_8T$ of shape $(3,2)$ with 4 as third top row entry, then $\alpha_4 =5$. Otherwise $\alpha_4 =4$.

 	\begin{figure}[H]
		\centering
    \ytableaushort
        {1 2 {\alpha\textsubscript{3}} {\alpha\textsubscript{4}} 7,
        \none}
        * {5,2}
        * [*(lightgray)]{4+1,2+0}
		\caption{The position of 7 in the top row of $R_8 T$}
		\label{figure: 7 in T part 1}
	\end{figure}

	Now, suppose that $7$ is in the third row of $R_8T$ from the top; see Figure~\ref{figure: 7 in T part 2}. In this case, looking at the third top row entry of 2-minors of $R_8T$ of shape $(3,2)$, the minimal available value gives $\alpha_3$ and the maximal value gives $\alpha_4 -1$. In particular, note that 7 cannot slide up with only one single deletion of something less than 7 and must itself be deleted instead.

   	\begin{figure}[H]
		\centering
    \ytableaushort
        {1 2 {\alpha\textsubscript{3}} {\alpha\textsubscript{4}}, \none,
        7}
        * {4,2,1}
        * [*(lightgray)]{4+0,2+0,0+1}
		\caption{The position of 7 in the third row of $R_8 T$}
		\label{figure: 7 in T part 2}
	\end{figure}
	
	Finally, suppose that $7$ is in the second row of $R_8T$; see Figure \ref{figure: 7 in T part 3}. We start with some basic observations on $\alpha_4$. By strict monotonicity along the top row, $\alpha_4$ is either $4,5,$ or~6.

    \begin{figure}[H]
		\centering
    \ytableaushort
        {1 2 {\alpha\textsubscript{3}} {\alpha\textsubscript{4}},
        \none\none 7}
        * {4,3}
        * [*(lightgray)]{4+0,2+1}
		\caption{The position of 7 in the second row of $R_8 T$}
		\label{figure: 7 in T part 3}
	\end{figure}

	 Consider the set of 2-minors of $R_8T$ of shape $(4,1)$. If $\alpha_4$ is 5 or 6, there is at least one such 2-minor with 5 as the last entry of its top row (obtained by deleting 7 and the second entry of the bottom row). If $\alpha_4=4$, then no such 2-minor is present. In this case, monotonicity along rows forces $\alpha_3=3$, completing the determination of $T'$.

  Thus, the case $\alpha_4 \in \{4,5\}$ remains. This leaves us with the five tableaux in Figure \ref{figure: 5 tableaux}.


	\begin{figure}[H]
		\centering
    \ytableaushort
        {1 2 5 6,
        3 4 7}
        * {4,3}
        * [*(lightgray)]{4+0,2+1}
		\;\;\;\;\;\;\;
    \ytableaushort
        {1 2 4 6,
        3 5 7}
        * {4,3}
        * [*(lightgray)]{4+0,2+1}
    		\;\;\;\;\;\;\;
    \ytableaushort
        {1 2 4 5,
        3 6 7}
        * {4,3}
        * [*(lightgray)]{4+0,2+1}
  \\[.5cm]
    \ytableaushort
        {1 2 3 6,
        4 5 7}
        * {4,3}
        * [*(lightgray)]{4+0,2+1}
		\;\;\;\;\;\;\;
    \ytableaushort
        {1 2 3 5,
        4 6 7}
        * {4,3}
        * [*(lightgray)]{4+0,2+1}
        \;\;\;\;\;\;\;\;\;\;\;\;\;\;\;\;\;\;\;\;\;\;\;\;\;\;\;\;\;\;\;\;\;\;\;\;
		\caption{The remaining options for the tableau $R_8T$}
		\label{figure: 5 tableaux}
	\end{figure}

 The three tableaux on the top row of Figure \ref{figure: 5 tableaux} can now be identified by investigating the possible values of the bottom row entry of their 2-minors of shape $(4,1)$. First, note that for the top left tableau the value 2 is possible in this position (obtained by deleting the 7 and 2 of $R_8T$). For the remaining four tableaux, cells can only slide up after a top row cell has been deleted first during an earlier jeu de taquin process, and the shape $(4,1)$ must result from deleting two of the bottom row entries of $R_8T$. In particular, none of these tableaux will have a 2-minor of shape $(4,1)$ with bottom row entry 2. This distinguishes the top left tableau in Figure \ref{figure: 5 tableaux} from the other four tableaux. For the remaining four tableaux, one easily verifies that all of the values $3,4,$ and $5$ are possible as bottom row entries of 2-minors of shape $(4,1)$ if and only if the top row of $R_8T$ is $(1,2,4,6)$. If only values 3 and 5 appear in this position, then the top row of $R_8T$ is $(1,2,4,5)$. And if only values 4 and 5 appear, then $R_8T$ is one of the two tableaux on the bottom row of Figure \ref{figure: 5 tableaux}.

 It remains to distinguish between the two tableaux on the bottom row of Figure \ref{figure: 5 tableaux}. Note that the bottom left tableau has a 2-minor of shape $(3,2)$ with top row $(1,3,4)$ (obtained by deleting the 1 and 2 of $R_8T$) while direct inspection shows that no such 2-minor exists for the bottom right tableau. In particular, it would be necessary to delete two of the entries $1,2,$ and $3$ of $R_8T$ to achieve a 2-minor of shape $(3,2)$ without the value 2 as a top row entry. However, every 2-minor resulting in this fashion will have top row $(1,3,5)$.
    %
\end{proof}

\section{Concluding Remarks}

For reconstructibility from $\M_k(T)$ for general $k$, it seems that the most one can hope for is the following quadratic bound, which is suggested by the few cases we are able to check via software:

\begin{conjecture}
    \label{conj:k^2 + 2k}
    Let $k \geq 2$.  Let $T \in \YT(n)$, where $n \geq k^2 + 2k$.  Then $\M_k(T)$ determines $T$.
\end{conjecture}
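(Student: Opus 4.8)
The conjecture generalizes Theorem~\ref{thm: k = 2 sharp}, so the plan is to follow the same overall architecture while isolating the genuinely hard part. First I would dispose of the entire range $n > k^2 + 2k$ by a clean induction on $n$, peeling off the top entry exactly as in the passage from Theorem~\ref{thm: k = 2 sharp} to Lemma~\ref{lemma: n = 8 lemma for k = 2}. For $n \geq k^2 + 2k + 1$, Lemma~\ref{lemma:location of k2 + 2k through n} already pins down the location of $n$ in $T$ (indeed of all of $[(k+1)^2, n]$). Setting $T' = \R_n T \in \YT(n-1)$, Lemma~\ref{lemma: removal of top elements and Mk} gives $\M_k(T') = \R_{n-k}\M_k(T)$, so $\M_k(T')$ is read off from $\M_k(T)$. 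Since $n - 1 \geq k^2 + 2k$, the inductive hypothesis reconstructs $T'$, and the known location of $n$ then recovers $T$. This leaves only the boundary case $n = k^2 + 2k$, which is the real content.

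For the base case $T \in \YT(k^2 + 2k)$, the shape of $T$ is determined by $\M_k(T)$ via~\eqref{equation: Monks shape determination bound}, and Corollary~\ref{cor: location of n} locates $n$. I would then attempt to recover the remaining entries by a second, finer peeling: strip the top few entries $n, n-1, \dots$ to land in $\YT(k^2 + 2k - j)$, invoking Monks's refined criterion (Lemma~\ref{Monks 3.6}) to certify that the shapes of these smaller tableaux are still determined, and using Lemma~\ref{lemma: removal of top elements and Mk} again to transport their $k$-minor sets back into $\M_k(T)$. Once enough top entries are stripped, the general identification results (Lemmas~\ref{lemmma: construct In(m) when m in same place} and~\ref{lemmma: general construct In(m) when m in same place and nothing could slide}) should recover $\In(m)$ for suitable outer corners $m$, letting me work downward to fix the small entries. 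The surviving-outer-corner bookkeeping of Lemma~\ref{lemma: outer corner counting}, together with the shape-$((k+1)^k,k)$ analysis of Lemma~\ref{lemma: movement of n-k in column/row} and Theorem~\ref{theorem: n position}, would serve as the engine for locating the top band $[n-k, n]$ precisely.

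The main obstacle is the combinatorial explosion of this base case. In the $k=2$ proof, reducing to $\YT(6)$ left only six shapes, and several still required delicate pairwise arguments distinguishing two tableaux that share all lower minors (for instance the pairs in Figure~\ref{figure: (3,2,1) part 2} and Figure~\ref{figure: 5 tableaux}), at times appealing to the full set $\M_k(T)$ rather than $\M_k(T')$. For general $k$ the number of shapes near the boundary $k^2 + 2k$, and the number of such hard pairs within each shape, grows with no evident uniform pattern, and the ad hoc ``delete these two cells and compare'' arguments do not obviously package into a single lemma. The crux is therefore to replace the case analysis with a structural principle: for each shape, identify a canonical family of entries whose positions are forced by \emph{extremal} (minimal or maximal) values appearing across a prescribed family of $k$-minors, and prove one separation lemma guaranteeing that two distinct tableaux of the same shape always differ in some such extremal statistic once $n = k^2 + 2k$. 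Proving that uniform separation statement, rather than verifying it shape by shape, is where I expect the difficulty to concentrate, and it is presumably why the authors state the general bound only as a conjecture.
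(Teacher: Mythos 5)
The statement you are trying to prove is stated in the paper as a \emph{conjecture}: the authors offer no proof of it, only computer verification in a few small cases, and your own closing sentence correctly guesses why. Measured as a proof attempt, your proposal has a genuine gap, and the gap is the entire content of the statement. Your inductive reduction of the range $n > k^2+2k$ to the single boundary case $n = k^2+2k$ is sound and mirrors exactly the paper's $k=2$ architecture: Lemma~\ref{lemma:location of k2 + 2k through n} locates $n$, Lemma~\ref{lemma: removal of top elements and Mk} transports $\M_k(\R_n T)$ into $\R_{n-k}\M_k(T)$, and induction recovers $\R_n T$. But this reduction only relocates the problem; for the base case $T \in \YT(k^2+2k)$ you give a strategy outline (``strip top entries, certify shapes via Lemma~\ref{Monks 3.6}, apply the identification lemmas, find a uniform separation principle'') and then explicitly concede that the key step --- a separation lemma showing two distinct tableaux of the same shape of size $k^2+2k$ always differ in some extremal minor statistic --- is unproven. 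Deferring the crux to a hoped-for lemma is not a proof, so the proposal does not establish the conjecture.

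One concrete technical obstruction to your base-case scaffolding is worth naming. After stripping $j$ top entries from a tableau of size $k^2+2k$, you land strictly below Monks's sharp bound \eqref{equation: Monks shape determination bound}, so the shapes of these smaller tableaux are \emph{not} in general determined by their $k$-minors; Lemma~\ref{Monks 3.6} rescues this only when $k^2+2k-j$ avoids every representation $(a+1)b+c-1$ with $a \le c \le k$ and $b + (c \operatorname{mod} a) \le k$, and there is no reason this arithmetic condition holds for the values of $j$ you would need --- it happens to hold for $k=2$, $j=2$ (size $6$), which is precisely the stroke of luck the paper's $k=2$ proof exploits. Moreover, even after that reduction the paper still needed the ad hoc pairwise arguments of Section~\ref{sec:sharp bound k=2} (the pairs in Figure~\ref{figure: (3,2,1) part 2} and Figure~\ref{figure: 5 tableaux}), some of which require the full set $\M_2(T)$ rather than $\M_2(T')$; your proposal acknowledges these do not visibly package into a uniform lemma. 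So the correct assessment is: your reduction step is right and matches the paper, but what remains open in your write-up is exactly what remains open in the paper.
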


As for reconstructibility from multisets $\mM_k(T)$, there is some evidence that the lower bound is linear in $k$.  We have been able to verify the following conjecture by computer for $k \leq 5$:

\begin{conjecture}
\label{conj:k+4}
    Let $k \geq 1$.  Let $T \in \YT(n)$, where $n \geq k + 4$.  Then $\mM_k(T)$ determines $T$.
\end{conjecture}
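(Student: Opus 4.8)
The plan is to prove Conjecture~\ref{conj:k+4} by induction on $n$, with base case $n = k+4$ and inductive step $n \geq k+5$. The engine of the induction is the multiset analogue of Lemma~\ref{lemma: removal of top elements and Mk}, namely $\mM_k(\R_n T) = \R_{n-k}\,\mM_k(T)$ as multisets. The proof given there for the set version is multiplicity-preserving, since at each stage removing the top cell commutes with the jeu de taquin deletions as a bijection on ordered deletion sequences, so the identical computation yields the multiset identity. The key point is that the right-hand side is computable directly from the given data: one simply applies $\R_{n-k}$ (deletion of the maximal label $n-k$) to every tableau in $\mM_k(T)$. Hence from $\mM_k(T)$ we obtain $\mM_k(\R_n T)$ with no knowledge of $T$ assumed, and with no circularity.

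For the inductive step, observe that $\R_n T \in \YT(n-1)$ with $n-1 \geq k+4$, so the inductive hypothesis recovers $\R_n T$ completely from $\mM_k(\R_n T)$. In particular we learn the shape $\mu$ of $\R_n T$ and the placement of every entry $1, \dots, n-1$. Since $T$ is obtained from $\R_n T$ by reinserting the entry $n$ at the unique cell of $\lambda/\mu$, where $\lambda$ is the (a priori unknown) shape of $T$, it remains only to identify which addable corner of $\mu$ holds $n$; once this single cell is found, $T$ is determined, and the result is automatically standard because $n$ is maximal. Thus the entire inductive step collapses to locating $n$, given the fully reconstructed tableau $\R_n T$ together with the multiset $\mM_k(T)$.

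To locate $n$, I would adapt the machinery of Section~5 to the multiset setting. For an addable corner $c$ of $\mu$ with $|\Out(c)| \geq k$, deleting $k$ cells of $\Out(c)$ leaves $c$ a surviving outer corner whose value is exactly $n-k$ precisely when $n$ sits at $c$; this is the multiset refinement of Lemma~\ref{lemma: outer corner counting} and Corollary~\ref{cor: location of n}. The advantage of $\mM_k$ over $\M_k$ is that the multiplicity with which each value appears at $c$, summed over all deletion sequences, should resolve the residual small-outer-area configurations (those treated for sets by Lemma~\ref{lemma: movement of n-k in column/row} and Theorem~\ref{theorem: n position}) at the far lower threshold $n \geq k+4$, where neither the set-based counting of Theorem~\ref{thm: exp lower bound} nor the shape-determination bound $k^2+2k$ is available. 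Because $\mu$ is already known, one is only distinguishing among the finitely many addable corners of a \emph{fixed} partition, which is a much weaker task than the full partition reconstruction that forces the $k^2+2k$ bound in the set case.

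The main obstacle is twofold. First, the base case $n = k+4$ admits no uniform argument at present; it is exactly the regime checked only by computer for $k \leq 5$, and tableaux of size $k+4$ are the least ``spread out'', so their minors carry the least information and the multiplicity data must be exploited very tightly. A successful proof will almost certainly require a structural understanding of these minimal-size tableaux rather than a case analysis that grows with $k$. Second, the sub-threshold location of $n$ is delicate precisely in the small-outer-area shapes classified by Lemma~\ref{lemma: bad shape with small outer area}; the hope is that multiplicities break exactly the ties that the set $\M_k$ cannot distinguish, but making this quantitative and uniform in $k$ is where I expect the real difficulty to lie.
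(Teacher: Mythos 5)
You should first be clear about the status of this statement: it is Conjecture~\ref{conj:k+4} of the paper, an \emph{open} problem that the authors report verifying only by computer for $k \leq 5$. There is no proof in the paper to compare against, and your submission is also not a proof by your own account: you concede that the base case $n = k+4$ has ``no uniform argument at present'' and that locating $n$ below the $k^2+2k$ threshold is exactly where ``the real difficulty'' lies. So what you have written is a strategy outline, and it must be judged on whether the steps you \emph{do} treat as settled actually hold.

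The one concrete tool you assert does not hold. You claim the multiset analogue of Lemma~\ref{lemma: removal of top elements and Mk}, namely $\mM_k(\R_n T) = \R_{n-k}\,\mM_k(T)$, on the grounds that the paper's proof is ``multiplicity-preserving'' via ``a bijection on ordered deletion sequences.'' It is not a bijection, and the identity fails already on cardinalities: counted with multiplicity, $\mM_k(T)$ has $n(n-1)\cdots(n-k+1)$ elements (the paper itself uses this count in the proof of Theorem~\ref{thm: exp lower bound}), and applying $\R_{n-k}$ to each element preserves that cardinality, whereas $\mM_k(\R_n T)$ has only $(n-1)(n-2)\cdots(n-k)$ elements. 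The discrepancy is caused by the deletion sequences of $T$ that at some step delete the cell carrying the current maximal label; such sequences have no counterpart among deletion sequences of $\R_n T$, which is invisible at the level of \emph{sets} but fatal for multisets. Tracking these sequences (the maximal entry always sits at an outer corner, so deleting it is the same as removing it, and this removal commutes with the other deletions) yields the corrected identity
\begin{equation*}
\R_{n-k}\,\mM_k(T) \;=\; \mM_k(\R_n T) \,\uplus\, k\cdot\bigl(\R_{n-k}\,\mM_{k-1}(\R_n T)\bigr),
\end{equation*}
whose correction term involves $\mM_{k-1}(\R_n T)$ --- an object that is not part of your data and cannot obviously be extracted from $\mM_k(T)$ without solving a reconstruction problem of comparable difficulty. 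Consequently the engine of your induction, ``from $\mM_k(T)$ we obtain $\mM_k(\R_n T)$ with no knowledge of $T$ and no circularity,'' is broken: the gap in your proposal is not confined to the two obstacles you flag, but sits in the very step you treated as routine. Any genuine attack along these lines would need either to show that the triangular system of corrected identities can be solved for $\mM_k(\R_n T)$ from $\mM_k(T)$ alone, or to abandon this reduction and work with $\mM_k(T)$ directly.
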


As a final remark, we point out that the lower bound on $n$ for reconstructibility depends very much on the shape of tableaux: upon restriction to certain shapes, $\M_k(T)$ determines $T$ for values of $n$ that are much smaller than the general lower bound.  This can be used to streamline proofs in the course of solving the problem for higher values of $k$.



\bibliographystyle{alpha}
\bibliography{main}

\end{document}